\documentclass[a4paper,11pt,leqno]{article}
\pagestyle{plain}
\usepackage{amssymb, amsmath, amsthm, graphicx}
\usepackage{bbm}
\usepackage{xcolor}
\textwidth=15cm
\textheight=22cm
\oddsidemargin=5mm
\evensidemargin=5mm
\parskip=3pt
\parindent=8mm
\topmargin=-0.5cm
\marginparwidth=1cm

\newtheorem{theorem}{Theorem}[section]
\newtheorem{lem}[theorem]{Lemma}
\newtheorem{cor}[theorem]{Corollary}
\newtheorem{prop}[theorem]{Proposition}
\newtheorem{defn}{Definition}[section]

\newtheorem{rem}{Remark}[section]

\numberwithin{equation}{section}


\newcommand{\sumtwo}[2]{\sum_{\substack{#1 \\ #2}}} 

\newcommand{\dis}{\displaystyle}

\renewcommand{\a}{\alpha}
\renewcommand{\b}{\beta}
\newcommand{\e}{\varepsilon}
\newcommand{\de}{\delta}
\newcommand{\fa}{\varphi}
\newcommand{\ga}{\gamma}
\renewcommand{\k}{\kappa}

\newcommand{\si}{\sigma}

\newcommand{\om}{\omega}
\newcommand{\De}{\Delta}
\newcommand{\Ga}{\Gamma}
\newcommand{\La}{\Lambda}

\newcommand{\lan}{\langle}
\newcommand{\ran}{\rangle}

\def\R{{\mathbb{R}}}

\def\Z{{\mathbb{Z}}}
\def\T{{\mathbb{T}}}

\allowdisplaybreaks


\title{Fast-reaction limit for Glauber-Kawasaki dynamics
\\ with two components}

\author{Anna De Masi, Tadahisa Funaki, Errico Presutti and Maria Eul\'alia Vares}
\date{\today}

\begin{document}
\maketitle

\begin{abstract}
\noindent
We consider the Kawasaki dynamics of two types of particles under a killing effect
on a $d$-dimensional square lattice. Particles move with possibly different
jump rates depending on their types.  The killing effect acts when particles of different
types meet at the same site. We show the existence of a limit under the diffusive space-time scaling and suitably growing killing rate: segregation of distinct types of particles does occur,  and the evolution of
the interface between the two distinct species is governed by the two-phase Stefan
problem. We apply the relative entropy method and combine it with some PDE techniques.
\end{abstract}

\section{Introduction}

The study of the fast-reaction limit in the reaction diffusion systems
goes back more than 20 years.  The motivation of this study comes from population dynamics
\cite{DHMP}, \cite{CDHMN}, \cite{IMMN}, mass-action kinetics chemistry
\cite{DDJ} and others.  Consider the system consisted of two types of species, say
$A$ and $B$, and assume each of them moves by diffusion with rates $d_1$ and $d_2$,
respectively.  When distinct species meet, they kill each other with high rate $K$.
This problem is formulated in PDE terminology as the system of equations
for densities $u_1(t,r)$ and $u_2(t,r)$ of species $A$ and $B$, respectively, written as
\begin{equation}  \label{eq:1.1}
\partial_t u_i(t) = d_i \De u_i(t)- K u_1(t)u_2(t), \quad i=1,2.
\end{equation}
Several papers including those cited above studied the limit as $K\to\infty$
of the solutions $u_i(t,r)$ of the system \eqref{eq:1.1} or its extensions,
that is, the limit as the killing rate of distinct species gets large.
This is called the fast-reaction limit.  It is known that the segregation
of two species occurs in the limit and the interface separating two distinct species
evolves according to the two-phase Stefan free boundary problem.

In the present paper, we formulate the problem at the original level of
species, i.e., at the underlying microscopic level,
and model it as a system with two distinct types of
particles.  Under a diffusive space-time scaling combined with the limit as $K\to\infty$
taken properly, we prove that the segregation of species occurs at macroscopic level and
derive the Stefan problem directly from our microscopic system.

The proof is divided into two parts and given as a combination of the techniques of
the hydrodynamic limit and the fast-reaction limit. In the first part, which is probabilistic,
we consider the relative entropy of the real system with respect to the
local equilibria defined as a product measure with mean changing in space and time
chosen according to the discretized hydrodynamic equation,
which is a discrete version of \eqref{eq:1.1}.  Then, we show that the
relative entropy behaves as
a small order of the total volume of the system.   This proves that
the macroscopic density profile of the system is close to the solution
of the discretized hydrodynamic equation.  We take product measures as
local equilibria, since those with constant means are global equilibria of
the Kawasaki dynamics. In the second part of the paper, we apply PDE results to
analyze the discrete equation and derive the Stefan problem from it.

\subsection{Model}
Let $\T_N^d :=(\Z/N\Z)^d\equiv \{1,2,\ldots,N\}^d$ be the $d$-dimensional
periodic square lattice of size $N$ and consider a system that consists of particles
of two distinct types.  The configuration space is $\mathcal{X}_N^2
=\mathcal{X}_N\times\mathcal{X}_N$, where $\mathcal{X}_N= \{0,1\}^{\T_N^d}$. Its
elements are denoted by $\tilde\si \equiv (\si_1, \si_2)
= \big(\{\si_{1,x}\}_{x\in \T_N^d},
\{\si_{2,x}\}_{x\in \T_N^d}\big)$.  The generator of the Kawasaki
dynamics of particles of a single type is defined by
\begin{align*}
(L_0^1 f)(\si)
&  = \frac12 \sum_{x, y\in\T_N^d:|x-y|=1}
\left\{  f(\si^{x,y})  -f(\si) \right\}, \quad \si = \{\si_x\}_{x\in \T_N^d}\in\mathcal{X}_N,
\end{align*}
for functions $f\colon\mathcal{X}_N\to \mathbb{R}$, and
where $\si^{x,y}\in  \mathcal{X}_N$ is defined from
$\si\in  \mathcal{X}_N$ as

$$
    (\sigma^{x,y})_z = \left\{
        \begin{array}{lll}
            \si_y & \hbox{if $z = x$,}\\
            \si_x & \hbox{if $z = y$,} \\
             \si_z & \hbox{\text{ otherwise}.}\\
        \end{array}
    \right.
$$
The generator of the two-component system
is given by $L_N = N^2L_0+K L_G$ with $K=K(N)$, where
\begin{align*}
& (L_0f)(\si_1,\si_2) = d_1 (L_0^1 f(\cdot,\si_2))(\si_1)
+ d_2 (L_0^1 f(\si_1,\cdot))(\si_2), \\
& (L_Gf)(\si_1,\si_2) = \sum_{x\in\T_N^d} \si_{1,x}\si_{2,x}
\left\{  f(\si_1^x,\si_2^x)  -f(\si_1,\si_2) \right\},
\end{align*}
for functions $f\colon\mathcal{X}_N^2\to \mathbb{R}$, and $d_1, d_2 >0$.
Here $\si^x\in  \mathcal{X}_N$ is defined from
$\si\in  \mathcal{X}_N$ as
$$
    \sigma^x_z = \left\{
        \begin{array}{ll}
            1-\si_x & \hbox{if $z = x$,}\\
            \si_z & \hbox{if $z \neq x$.}
        \end{array}
    \right.
$$
Let $\bar\si^N(t) = (\si_1^N(t),\si_2^N(t)) \equiv (\si_{1,x}^N(t),
\si_{2,x}^N(t))_{x\in \T_N^d}$ be the Markov process on $\mathcal{X}_N^2$
generated by $L_N$.  The macroscopic empirical measures on
$\T^d:= (\R/\Z)^d \equiv [0,1)^d$ associated with a configuration
$\bar\si = (\si_1,\si_2) \in \mathcal{X}_N^2$ are defined by
$$
\a_i^N(dr;\bar\si) = \frac1{N^d} \sum_{x\in\T_N^d} \si_{i,x}
\de_{\frac{x}N}(dr), \quad r \in \T^d, \; i=1,2.
$$
The goal is to study the limit of the macroscopic empirical measures of
the process $\bar\si^N(t)$ as $N\to\infty$, with properly scaled $K(N)$.

\subsection{Main result}

We first summarize our assumptions on the initial distribution of
$\bar\si^N(0)$.
\begin{itemize}
\item[(A1)] Let $u_i^N(0,\cdot) =\{u_i^N(0,x)\}_{x\in \T_N^d}, i=1,2$ be given and
satisfy two bounds
$$
e^{-c_1K} \le u_i^N(0,x) \le c_2 \quad \text{ and } \quad
|\nabla^N u_i^N(0,x)|\le C_0 K,\mathbb{}
$$
for every $i=1,2$, $x\in \T_N^d$ with $c_1>0, 0<c_2<1, C_0>0$, where $\nabla^N$
is defined as
\begin{equation}  \label{eq:nablaN}
\nabla^N u(x) = \left\{ N(u(x+e_i)-u(x))\right\}_{i=1}^d,
\end{equation}
and $e_i\in \Z^d$ are the unit vectors in the $i$th positive direction.
\item[(A2)] Let $\mu_0^N$ be the distribution of $\tilde\si^N(0)$ on
$\mathcal{X}_N^2$ and let $\nu_0^N = \nu_{u_1^N(0,\cdot),u_2^N(0,\cdot)}$
be the Bernoulli measure on $\mathcal{X}_N^2$ with means
$u_1^N(0,\cdot), u_2^N(0,\cdot)$ given as above.
We assume the relative entropy defined in \eqref{eq:2.re} satisfies
$H(\mu_0^N|\nu_0^N) = O(N^{d-\de_0})$ as $N\to\infty$ with some
$\de_0>0$.
\item[(A3)]  We assume $u_i^N(0,r), r\in \T^d$
defined from $u^N_i(0,x)$ through \eqref{eq:4.10} converge to some $u_i(0,r)$
weakly in $L^2(\T^d)$ as $N\to\infty$, for $i=1,2$, respectively.
\item[(A4)$_\de$] $K=K(N)$ satisfies
 $1\le K(N)\le \de (\log N)^{1/2}$  and $K(N)\to\infty$ as $N\to \infty$.
\end{itemize}

Our main theorem is formulated as follows.

\begin{theorem}  \label{thm:1.1}
We assume the four conditions (A1)-(A3),(A4)$_\de$ with $\de>0$ chosen
sufficiently small depending on $T>0$.  Then, we have the following. \\
{\rm (1)}  The macroscopic empirical measures $\a_i^N(t,dr):=\a_i^N(dr; \bar\si^N(t))$
of the process $\bar\si^N(t)$  converge
to $u_i(t,r)dr$, respectively, for $i=1,2$, that is
$$
\lim_{N\to\infty}P(|\lan\a_i^N(t),\fa\ran - \lan u_i(t),\fa\ran|>\e)=0,
\quad i=1, 2,
$$
for every $\e>0$, $t\in [0,T]$  and $\fa\in C^\infty(\T^d)$,
and $u_1(t,r) u_2(t,r)=0$ a.e.\ $r$ holds, where $\lan \a,\fa\ran$ and
$\lan u,\fa\ran$ stand for the integrals over $\T^d$. \\
{\rm (2)} $w(t,r) := u_1(t,r)-u_2(t,r)$ is the unique weak solution of
\begin{equation} \label{eq:P}
\left\{\begin{aligned}
& \partial_t w = \De \mathcal{D}(w), \quad \text{ in } \T^d, \\
& w(0,r)=u_1(0,r)-u_2(0,r),
\end{aligned} \right.
\end{equation}
where $\De$ is the Laplacian on $\T^d$, and
$\mathcal{D}(s) = d_1s, s\ge 0$ and $=d_2s, s<0$.
\end{theorem}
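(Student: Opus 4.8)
The proof runs in two stages linked by the \emph{discretized hydrodynamic system}
\begin{equation}\label{eq:disc}
\partial_t u_i^N(t,x)=d_i\,\De^N u_i^N(t,x)-K\,u_1^N(t,x)\,u_2^N(t,x),\qquad x\in\T_N^d,\ i=1,2,
\end{equation}
with the initial data of (A1), where $\De^N$ is the discrete Laplacian associated with $\nabla^N$. \textbf{Stage 1} is probabilistic: using the relative entropy (Yau) method I would show that the law $\mu_t^N$ of $\bar\si^N(t)$ stays close to the Bernoulli product measure $\nu_t^N:=\nu_{u_1^N(t,\cdot),u_2^N(t,\cdot)}$, which yields part (1) modulo PDE facts. \textbf{Stage 2} is the fast-reaction limit $N\to\infty$ (hence $K=K(N)\to\infty$) for \eqref{eq:disc}, which identifies the limit with the Stefan problem \eqref{eq:P}.

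For Stage~1, first record a priori bounds for \eqref{eq:disc}: the discrete maximum principle gives $e^{-CK}\le u_i^N(t,x)\le c_2$ on $[0,T]$ (using $0\le u_1^Nu_2^N\le u_i^N$ for the lower bound and the sign of the reaction for the upper), and the maximum principle applied to the discrete derivatives of \eqref{eq:disc} propagates $|\nabla^N u_i^N(t,x)|\le CK$ up to a factor $e^{CK}$, with $C=C(T,c_1,c_2,C_0)$. Then, by Yau's inequality, $\tfrac{d}{dt}H(\mu_t^N|\nu_t^N)\le -N^2D_N(t)+\int V_t^N\,d\mu_t^N$, where $D_N(t)\ge0$ is the Kawasaki Dirichlet form of $(d\mu_t^N/d\nu_t^N)^{1/2}$ and $V_t^N=(\nu_t^N)^{-1}L_N^*\nu_t^N-\partial_t\log\nu_t^N$. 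The reaction term in \eqref{eq:disc} is chosen precisely so that the leading Glauber part of $(\nu_t^N)^{-1}L_G^*\nu_t^N$ is cancelled by the corresponding part of $\partial_t\log\nu_t^N$, and, after summation by parts, the $d_i\De^N u_i^N$ terms cancel the leading $N^2L_0$ part, the surplus being absorbed by $-N^2D_N(t)$. What remains are error functionals: Taylor remainders of order $N^{d-1}$ from the Kawasaki part, and fluctuation functionals such as $\sum_x(\si_{1,x}\si_{2,x}-u_{1,x}^Nu_{2,x}^N)\,g_x$ and $\sum_x(\si_{i,x}-u_{i,x}^N)\,h_x$ coming from the reaction matching, whose coefficients $g_x,h_x$ are bounded by $e^{CK}$ by the a priori bounds. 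These are estimated via the entropy inequality and the product structure of $\nu_t^N$; since $e^{CK}=e^{C\delta\sqrt{\log N}}=N^{o(1)}$ under (A4)$_\de$, for $\de$ small all errors are $o(N^d)$. A Gronwall argument starting from (A2) then yields $\sup_{0\le t\le T}H(\mu_t^N|\nu_t^N)=o(N^d)$, and the entropy inequality together with the law of large numbers for $\a_i^N$ under the product measure $\nu_t^N$ gives $\lan\a_i^N(t),\fa\ran-\tfrac1{N^d}\sum_{x\in\T_N^d}u_i^N(t,x)\fa(x/N)\to0$ in probability.

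For Stage~2, testing \eqref{eq:disc} against $1$ gives $K\int_0^T\!\!\int_{\T^d}u_1^Nu_2^N\,dr\,dt\le\int_{\T^d}u_1^N(0,\cdot)\le c_2$, so $\int_0^T\!\!\int u_1^Nu_2^N\to0$; testing against $u_i^N$ gives $\|u_i^N\|_{L^2([0,T],H^1)}\le C$. Since $u_i^N$ is bounded in $L^\infty$ and $\partial_t u_i^N$ is bounded in $L^2([0,T],H^{-1})+L^1([0,T]\times\T^d)$, an Aubin--Lions--Simon argument (allowing a merely $L^1$-in-time bound on the derivative) gives, along a subsequence, $u_i^N\to u_i$ strongly in $L^2([0,T]\times\T^d)$ with $0\le u_i\le c_2$, $u_i\in L^2([0,T],H^1)$, hence $u_1u_2=0$ a.e. The decisive point is that $w^N:=u_1^N-u_2^N$ solves the reaction-free equation $\partial_t w^N=\De^N(d_1u_1^N-d_2u_2^N)$; thus $w^N\to w:=u_1-u_2$ in $L^2$ and, by equicontinuity from the bound on $\partial_t w^N$, also in $C([0,T],W^{-1,q})$, and since $u_1u_2=0$ one has, a.e., $d_1u_1-d_2u_2=\mathcal D(w)$ (on $\{u_2=0\}$, $w=u_1\ge0$ and $d_1u_1=d_1w$; on $\{u_1=0\}$, $w=-u_2\le0$ and $-d_2u_2=d_2w$). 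Passing to the limit in the weak form of the equation for $w^N$ (using $\De^N\fa\to\De\fa$ for $\fa\in C^\infty$) and invoking (A3) for the initial trace shows that $w$ is a weak solution of \eqref{eq:P}; uniqueness of the weak solution of \eqref{eq:P}---standard because $\mathcal D$ is nondecreasing and Lipschitz (e.g.\ by an $L^1$-contraction / doubling-of-variables argument)---forces the whole sequence to converge and identifies $u_1=w^+$, $u_2=w^-$. Combined with Stage~1 this gives both (1) and (2).

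The main obstacle is Stage~1, and within it the control of the Glauber contribution to $V_t^N$: because the killing rate diverges, the error functionals produced by matching the reaction term carry coefficients of size $e^{CK}$, and naive bounds give errors of order $K^2e^{CK}N^d\neq o(N^d)$. Making them $o(N^d)$ requires simultaneously (i) the exact algebraic cancellation built into the reaction term of \eqref{eq:disc}, (ii) the a priori lower bound $u_i^N\ge e^{-CK}$ so that the surviving coefficients are at most $e^{CK}$, and (iii) the sharp growth restriction $K\le\delta(\log N)^{1/2}$ with $\delta$ small enough depending on $T$, which turns each $e^{CK}$ into $N^{o(1)}$ and lets $-N^2D_N(t)$ absorb the residual quadratic terms. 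Stage~2 is comparatively standard, the only delicate point being the merely $L^1$-in-time bound on the reaction term, which is bypassed by working with the reaction-free quantity $w^N$.
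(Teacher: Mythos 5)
Your overall architecture coincides with the paper's: local equilibria $\nu_t^N$ built from the discretized system \eqref{eq:HD-discre}, a Jara--Menezes/Yau entropy inequality in which the linear-in-fluctuation terms cancel by the choice of the reaction term, Gronwall, and then a fast-reaction limit for \eqref{eq:HD-discre} itself (your Aubin--Lions route and the paper's Fr\'echet--Kolmogorov route to $L^2$-compactness are interchangeable, and the identification $d_1u_1-d_2u_2=\mathcal D(w)$ and the uniqueness input are the same). But there is a genuine gap at the heart of Stage 1. After the cancellations, what survives is \emph{quadratic} in the normalized fluctuations, e.g.\ $V=K\sum_x(u_1+u_2-1)u_1u_2\,\om_{1,x}\om_{2,x}$ and the Kawasaki analogues with coefficient $N^2(u_i(y)-u_i(x))^2\le CK^2$. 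Your plan to bound these ``via the entropy inequality and the product structure of $\nu_t^N$'' fails as stated: each summand is bounded and centered, so the exponential moment forces $\ga\lesssim 1/K$ and the entropy inequality returns $KH(\mu_t|\nu_t)+O(KN^d)$, and the second term is not $o(N^d)$. The missing idea is the replacement of $V$ by its block average $V^\ell$ over boxes of side $\ell= N^{1/d-\k}$ \emph{before} applying the entropy and concentration inequalities: the replacement cost is paid by the Dirichlet form (integration by parts under the space-inhomogeneous Bernoulli measure combined with the flow lemma --- this is where the negative term $-2N^2\mathcal{D}(\sqrt{f_t};\nu_t)$ is actually consumed), and the averaged variables have variance $O(\ell^{-d})$ up to $g_d(\ell)$, so the concentration inequality then yields an error $O(KN^d/\ell^d)=O(N^{d-1+\k})$. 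Without this two-step structure the residual quadratic terms cannot be absorbed.

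A second, quantitative problem: you allow the gradient bound $|\nabla^Nu_i^N(t,\cdot)|\le CKe^{CK}$. The Kawasaki quadratic term then carries the coefficient $K^2e^{2CK}$ into the Gronwall constant, and $\exp(CTK^2e^{2CK})=N^{CT\de^2\exp(2C\de\sqrt{\log N})}$ diverges faster than any power of $N$, for every $\de>0$. The paper avoids this by proving $|\nabla^Nu_i^N(t,\cdot)|\le K(C_0+C\sqrt t)$ with \emph{no} exponential loss, via Duhamel's formula and the gradient estimate $|\nabla^Np^N(t,x,y)|\le Ct^{-1/2}p^N(ct,x,y)$ for the discrete heat kernel; this keeps the Gronwall factor at $e^{CK^2T}\le N^{CT\de^2}$, which is exactly where the smallness of $\de$ relative to $T$ and to the exponents $\de_0,\de_1$ is used. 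Finally, a small point: to pass from $H(\mu_t^N|\nu_t^N)=o(N^d)$ to part (1) you need a large-deviation bound of the form $\nu_t^N(\mathcal A^\e_{N,t})\le e^{-CN^d}$ for the product measure, not merely a law of large numbers under $\nu_t^N$.
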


The weak solution of \eqref{eq:P} is defined as follows.

\begin{defn}  \label{def1.1}
We call $w=w(t,r)$ a weak solution of \eqref{eq:P} if
it satisfies
\begin{itemize}
\item[(i)]  $w \in L^\infty([0,T]\times \T^d)$  for every $T>0$;
\item[(ii)] For all $T>0$ and $\psi(t,x) \in C^{1,2}([0,T]\times \T^d)$
such that $\psi(T,r)=0$, we have
$$
\int_0^T\int_{\T^d} (w\partial_t \psi+\mathcal{D}(w)\De\psi)dr dt
= - \int_{\T^d} w_0\psi(0,r)dr.
$$
\end{itemize}
\end{defn}

The uniqueness of the weak solution of \eqref{eq:P} is
shown in \cite{CDHMN}, Corollary 3.8.  As pointed out in \cite{DHMP},
\eqref{eq:P} is the weak formulation of the following two-phase
Stefan problem for $u_1$ and $u_2$:
\begin{equation} \label{eq:Stefan}
\left\{\begin{aligned}
& \partial_t u_1 = d_1 \De u_1, \quad \text{ on }
D_1(t) = \{r\in \T^d; u_1(t,r)>0, \; u_2(t,r)=0\}, \\
& \partial_t u_2 = d_2 \De u_2, \quad \text{ on }
D_2(t) = \{r\in \T^d; u_1(t,r)=0, \; u_2(t,r)>0\}, \\
& u_1=u_2 =0, \qquad \text{ on } \Ga(t):= \partial D_1(t) = \partial D_2(t),\\
& d_1 \partial_n u_1 = - d_2 \partial_nu_2,\quad \text{ on } \Ga(t),
\end{aligned} \right.
\end{equation}
where $n$ is the unit normal vector on $\Ga(t)$ directed to $D_1(t)$.
Indeed, if the system \eqref{eq:Stefan} has a smooth solution, that is, if $\Ga(t)$ is
smooth, $u_i(t,r), i=1,2,$ are smooth in $D_i(t)$ and continuous on $\T^d$, then
it determines a weak solution.

The proof of Theorem \ref{thm:1.1} is divided into two parts
as we mentioned above.  The main task is to show that the relative
entropy of our system compared with the local equilibria defined
through the discretized hydrodynamic equation \eqref{eq:HD-discre}
behaves as $o(N^d)$, namely, the relative entropy per
volume tends to $0$ as $N\to\infty$.  This is formulated in
Theorem \ref{thm:EstHent} and shown in Sections \ref{sec:2} and \ref{sec:3}.
Once this is shown, one can prove that the macroscopic empirical measures
$\a_i^N(t)$ is close to the solution of \eqref{eq:HD-discre},
see Section \ref{sec:4}.  In the last Section \ref{sec:5}, we show
that the solution of \eqref{eq:HD-discre} converges to the weak
solution of \eqref{eq:P}.

A related model with instantaneous annihilation was studied by Funaki \cite{F99} and
the same equation \eqref{eq:P} was derived in the limit.  Briefly saying, $1\ll K \ll N$ in our model,
while $1\ll N \ll K=\infty$ in \cite{F99}.  Sasada \cite{Sa} considered the model with
non-instantaneous annihilation together with creation of two distinct types of particles.

\section{Relative entropy method}  \label{sec:2}

The relative entropy of two probability measures $\mu$ and $\nu$ on $\mathcal{X}_N^2$ is defined as
		\begin{equation}
		\label{eq:2.re}
H(\mu|\nu) := \int_{\mathcal{X}_N^2} \frac{d\mu}{d\nu} \log \frac{d\mu}{d\nu} \cdot d\nu.
		\end{equation}
For a probability measure $\nu$ on $\mathcal{X}_N^2$,  the
Dirichlet form $\mathcal{D}(f;\nu)$, $f$:$\mathcal{X}_N^2 \to \R$, associated to the generator $L_0$ is defined as
$$
\mathcal{D}(f;\nu) = \frac14\sumtwo{x,y\in\T_N^d}{ |x-y|=1}
\int_{\mathcal{X}_N^2} \left[d_1\{f(\si_1^{x,y},\si_2)-f(\si_1,\si_2)\}^2
+d_2\{f(\si_1,\si_2^{x,y})-f(\si_1,\si_2)\}^2 \right]d\nu.
$$
Let $\mu_t=\mu_t^N$ be the law of $(\si_1^N(t),\si_2^N(t))$ generated by
$L_N = N^2L_0+KL_G$ on $\mathcal{X}_N^2$.

We have the following estimate on the time derivative of
the relative entropy. See \cite{F18}, \cite{FT} for the proof.

\begin{prop} \label{prop:RE}
For any probability measures $\{\nu_t\}$ and $m$ on
$\mathcal{X}_N^2$ both with full supports in
$\mathcal{X}_N^2$, we have
\begin{equation} \label{eq:dH}
\frac{d}{dt} H(\mu_t|\nu_t)
\le - 2 N^2 \mathcal{D}\left(\sqrt{\frac{d\mu_t}{d\nu_t}}; \nu_t\right)
+ \int_{\mathcal{X}_N^2} \left( L_N^{*,\nu_t} 1 - \partial_t \log \psi_t \right) d\mu_t.
\end{equation}
where $L_N^{*,\nu_t}$  is the adjoint of $L_N$ on $L^2(\nu_t)$ and
$\dis{\psi_t := \frac{d\nu_t}{dm}}$.
\end{prop}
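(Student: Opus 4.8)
\emph{Proof proposal.} Since $\mathcal{X}_N^2$ is a finite set and both $\nu_t$ and $m$ have full support, there is no analytic subtlety, and the estimate follows from a direct differentiation of the entropy combined with one elementary convexity inequality; this is the classical computation behind the relative entropy method (cf. \cite{F18}, \cite{FT}), which I would reproduce as follows. Set $f_t := d\mu_t/dm$, $\psi_t := d\nu_t/dm$ and $h_t := f_t/\psi_t = d\mu_t/d\nu_t$, so that $f_t,\psi_t>0$ and $h_t\ge 0$. The law $\mu_t$ solves the forward equation $\partial_t f_t = L_N^{\dagger} f_t$ in $L^2(m)$ (a linear ODE on a finite-dimensional space), so all time derivatives and interchanges below are legitimate.

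First I would differentiate $H(\mu_t|\nu_t)=\int f_t\log h_t\,dm$ in $t$. Using $\partial_t\log h_t = \partial_t f_t/f_t - \partial_t\log\psi_t$, the conservation of mass $\int\partial_t f_t\,dm=0$, and the definition of the adjoint ($\int (L_N^{\dagger} f_t)\,g\,dm = \int f_t\,(L_N g)\,dm$ with $g=\log h_t$, together with $f_t\,dm=d\mu_t$), the terms reorganize into
\[
\frac{d}{dt}H(\mu_t|\nu_t)=\int (L_N\log h_t)\,d\mu_t-\int \partial_t\log\psi_t\,d\mu_t .
\]
Next, writing $\int(L_N\log h_t)\,d\mu_t=\int h_t\,(L_N\log h_t)\,d\nu_t$ and expanding $L_N=N^2L_0+KL_G$ as a sum over its elementary jumps $\eta\to\eta'$ with rates $c(\eta,\eta')$, one has $\int h_t\,(L_N\log h_t)\,d\nu_t=\sum_{\eta,\eta'}\nu_t(\eta)\,c(\eta,\eta')\,h_t(\eta)\big(\log h_t(\eta')-\log h_t(\eta)\big)$. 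Applying the elementary inequality $b\log(a/b)\le (a-b)-(\sqrt a-\sqrt b)^2$, valid for $a,b\ge 0$ (it reduces to $\log s\le s-1$), termwise with $a=h_t(\eta')$, $b=h_t(\eta)$ gives
\[
\int (L_N\log h_t)\,d\mu_t\le -\sum_{\eta,\eta'}\nu_t(\eta)c(\eta,\eta')\big(\sqrt{h_t(\eta')}-\sqrt{h_t(\eta)}\big)^2+\int (L_N h_t)\,d\nu_t .
\]

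It then remains to identify the two right-hand terms. The linear term equals $\int (L_N h_t)\,d\nu_t=\int h_t\,(L_N^{*,\nu_t}1)\,d\nu_t=\int L_N^{*,\nu_t}1\,d\mu_t$, which is exactly the first bracketed term in \eqref{eq:dH}; it encodes the defect of $\nu_t$-invariance of $L_N$, and since $\nu_t$ is a product measure with non-constant mean it is not invariant, so this term does not vanish. In the quadratic term, the contribution of $KL_G$ is nonpositive and I would simply discard it, while the contribution of $N^2L_0$, after splitting into the nearest-neighbour bond sums for the two components with weights $d_1,d_2$ and comparing with the $\tfrac14$-normalization in the definition of $\mathcal{D}(\,\cdot\,;\nu)$, equals precisely $-2N^2\mathcal{D}(\sqrt{h_t};\nu_t)$. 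Substituting into the displayed identity for $\frac{d}{dt}H$ yields \eqref{eq:dH}. There is no genuine obstacle here; the only points to watch are the bookkeeping where $h_t$ vanishes (harmless on a finite space, using $0\log 0=0$ and the $a=0$ or $b=0$ cases of the convexity inequality) and getting the combinatorial constant right so that the \emph{full} coefficient $2N^2$ of the Dirichlet form is produced — this factor is exactly what will be needed afterwards to absorb the error terms arising from $\int (L_N^{*,\nu_t}1-\partial_t\log\psi_t)\,d\mu_t$.
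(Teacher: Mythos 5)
Your proof is correct and is essentially the argument the paper relies on: the paper gives no proof of Proposition \ref{prop:RE} itself but defers to \cite{F18}, \cite{FT}, and your computation (differentiate $\int f_t\log(f_t/\psi_t)\,dm$, pass to $\int L_N\log h_t\,d\mu_t$, apply $b\log(a/b)\le(a-b)-(\sqrt a-\sqrt b)^2$ termwise, identify $\int L_Nh_t\,d\nu_t=\int L_N^{*,\nu_t}1\,d\mu_t$ and the Kawasaki quadratic term as $-2N^2\mathcal{D}(\sqrt{h_t};\nu_t)$ after discarding the nonpositive Glauber piece) is exactly that standard Yau/Jara--Menezes estimate, with the factor of $2$ correctly accounted for against the $\tfrac14$ in the definition of $\mathcal{D}$.
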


This estimate was first used by Guo, Papanicolaou and Varadhan
taking $\nu_t$ to be a global equilibrium which is independent of $t$
and then by Yau dropping the negative Dirichlet form term, see \cite{F18}.
Then Jara and Menezes introduced \eqref{eq:dH} as a combination of these two estimates,
cf.\ \cite{JM}.

\medskip

We use \eqref{eq:dH} with the following Bernoulli measures $\nu_t$. Let $u_i^N(t) = \{u_i(t,x)=u_i^N(t,x)\}_{x\in \T_N^d}$,
$i=1,2$ be the solution of the system of the discretized hydrodynamic equation:
\begin{equation} \label{eq:HD-discre}
\partial_t u_i^N(t,x) = d_i \De^N u_i^N(t,x) -K u_1^N(t,x)u_2^N(t,x), \qquad  i=1,2
\end{equation}
 where $\De^N = N^2 \De$ and
$$
(\De u)(x) = \sum_{y\in \T_N^d:|y-x|=1} \big(u(y)-u(x)\big).
$$
Note that \eqref{eq:HD-discre} is a discrete version of \eqref{eq:1.1}.
We define
$\nu_t=\nu_{u_1(t,\cdot),u_2(t,\cdot)}$,   where we denote by $\nu_{u_1(\cdot),u_2(\cdot)}$
for $u_i(\cdot)=\{u_i(x)\}_{x\in\T_N^d}, i=1,2$ the Bernoulli measure
on $\mathcal{X}_N^2$ such that
$$
\nu_{u_1(\cdot),u_2(\cdot)}(\si_{i,x}=1) = u_i(x),
$$
for every $x \in \T_N^d$ and $i=1,2$.

The main result in the probabilistic part is the following Theorem.

\begin{theorem} \label{thm:EstHent}
Assume the initial distribution verifies the Hypothesis of Theorem \ref{thm:1.1}.
Then, we have
$$
H(\mu_t^N|\nu_t^N) = o(N^d), \quad t\in [0,T],
$$
as $N\to\infty$.  
\end{theorem}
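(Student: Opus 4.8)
\emph{Plan.} I would run the relative entropy (Yau) scheme based on \eqref{eq:dH}, with $m=\nu_0^N$ and $\psi_t=d\nu_t^N/dm$. Writing $V_t:=L_N^{*,\nu_t}1-\partial_t\log\psi_t$, the whole task is to absorb $\int V_t\,d\mu_t^N$ into the right-hand side of \eqref{eq:dH} so as to get, after integration in time,
$$
\frac{d}{dt}H(\mu_t^N|\nu_t^N)\le C(N)\,H(\mu_t^N|\nu_t^N)+r_t^N,\qquad C(N)=O(K^2),\quad \int_0^T r_t^N\,dt=o(N^d).
$$
Gronwall together with (A2) then gives $H(\mu_t^N|\nu_t^N)\le e^{C(N)t}\bigl(O(N^{d-\de_0})+o(N^d)\bigr)$, which is $o(N^d)$ once $\de$ in (A4)$_\de$ is small relative to $T$: since $K\le\de(\log N)^{1/2}$ one has $e^{C(N)t}=N^{O(\de^2 t)}$ and $e^{cK}=o(N^\e)$ for every $\e>0$. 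Two inputs are needed from the analysis of \eqref{eq:HD-discre} (Section \ref{sec:3}): the propagation of (A1), $e^{-C(T)K}\le u_i^N(t,x)\le c_2$ and $|\nabla^N u_i^N(t,x)|\le C(T)K$ on $[0,T]$; and the dissipation identity obtained by summing \eqref{eq:HD-discre} over $x\in\T_N^d$ (the discrete Laplacian has zero sum on the torus),
$$
K\int_0^T\sum_{x\in\T_N^d}u_1^N(t,x)u_2^N(t,x)\,dt=\sum_{x\in\T_N^d}\bigl(u_1^N(0,x)-u_1^N(T,x)\bigr)\le c_2 N^d .
$$
From $\log\psi_t=\sum_{i,x}\{\si_{i,x}\log u_i^N(t,x)+(1-\si_{i,x})\log(1-u_i^N(t,x))\}+\mathrm{const}$ and \eqref{eq:HD-discre},
$$
\partial_t\log\psi_t=\sum_{i=1,2}\sum_{x\in\T_N^d}\frac{d_i\De^N u_i^N(t,x)-K u_1^N(t,x)u_2^N(t,x)}{u_i^N(t,x)(1-u_i^N(t,x))}\bigl(\si_{i,x}-u_i^N(t,x)\bigr),
$$
and $L_N^{*,\nu_t}1=N^2L_0^{*,\nu_t}1+KL_G^{*,\nu_t}1$ is computed by the usual change of variables for the product measure $\nu_t$; I would match the diffusive and the reactive halves of $V_t$ separately.

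\emph{Kawasaki part.} This is the classical core of the method. Computing $N^2L_0^{*,\nu_t}1$ and integrating by parts against $\sum_{i,x}\tfrac{d_i\De^N u_i^N}{u_i^N(1-u_i^N)}(\si_{i,x}-u_i^N)$, the leading (Laplacian) contributions cancel \emph{exactly} — this is precisely why $\nu_t$ is built from the solution of the \emph{discretized} equation \eqref{eq:HD-discre} rather than of \eqref{eq:1.1} — leaving (i) a sum of discrete-gradient current terms in fluctuation--dissipation form, absorbed into $-2N^2\mathcal{D}(\sqrt{d\mu_t/d\nu_t};\nu_t)$ by Cauchy--Schwarz ($\int fg\,d\mu_t\le N^2\mathcal{D}+N^{-2}\|f\|_{-1,\nu_t}^2$), with $\|f\|_{-1,\nu_t}^2=O(K^2e^{C(T)K}N^{d})$ by the propagated gradient and lower bounds, so that after dividing by $N^2$ this is $O(N^{d-2}K^2e^{C(T)K})=o(N^d)$; and (ii) higher-order discrete Taylor remainders, bounded pointwise by $O(K^2N^{d-1})=o(N^d)$. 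These are the estimates of \cite{JM}, \cite{FT}, modified only by carrying the $K$-dependent bounds on $u_i^N$ — harmless precisely because of (A4)$_\de$.

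\emph{Glauber part.} Since the move $(\si_{1,x},\si_{2,x})=(1,1)\to(0,0)$ has no reverse under $L_G$ (the creation rate $\si_{1,x}\si_{2,x}$ vanishes after the flip), the change of variables for $\nu_t$ gives
$$
L_G^{*,\nu_t}1=\sum_{x\in\T_N^d}\Bigl[(1-\si_{1,x})(1-\si_{2,x})\frac{u_1^N(t,x)u_2^N(t,x)}{(1-u_1^N(t,x))(1-u_2^N(t,x))}-\si_{1,x}\si_{2,x}\Bigr].
$$
Expanding this and the reactive part of $\partial_t\log\psi_t$ in the centred variables $\si_{i,x}-u_i^N(t,x)$, the constant and all linear terms cancel and one is left with the single-site quadratic piece
$$
V_t^{G}:=K L_G^{*,\nu_t}1-(\partial_t\log\psi_t)_{\mathrm{reac}}=K\sum_{x\in\T_N^d}\frac{u_1^N(t,x)+u_2^N(t,x)-1}{(1-u_1^N(t,x))(1-u_2^N(t,x))}\bigl(\si_{1,x}-u_1^N(t,x)\bigr)\bigl(\si_{2,x}-u_2^N(t,x)\bigr),
$$
which has zero $\nu_t$-mean and prefactor bounded by $K/(1-c_2)^2$. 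To control $\int V_t^G\,d\mu_t$ I would use the entropy inequality, $\int V_t^G\,d\mu_t\le\frac1\ga H(\mu_t|\nu_t)+\frac1\ga\log\int e^{\ga V_t^G}\,d\nu_t$; since $\nu_t$ is a product measure and each summand has $\nu_t$-variance $\le u_1^N(t,x)u_2^N(t,x)$, for $\ga K$ bounded one gets $\log\int e^{\ga V_t^G}\,d\nu_t\le C\ga^2K^2\sum_x u_1^N(t,x)u_2^N(t,x)$. With $\ga$ of order $K^{-2}$ the term $\frac1\ga H(\mu_t|\nu_t)$ supplies the $O(K^2)$ Gronwall constant, while the remaining contribution is handled by the dissipation identity above, together with its counterpart for the true process (the expected number of killings does not exceed the initial particle number, $K\int_0^T\!\int\sum_x\si_{1,x}\si_{2,x}\,d\mu_t\,dt\le N^d$, which bounds $\sum_x$ of the $\mu_t$-covariance of the two species). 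The main point is to show that, assembled, this remaining contribution integrates in time to $o(N^d)$.

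\emph{Main obstacle.} The one genuinely new difficulty relative to a standard hydrodynamic-limit relative-entropy estimate is the divergent reaction rate $K\to\infty$. The reactive error $V_t^G$ is of size $K$ per site and would be fatal under a crude bound; it can only be tamed through the extra factor $\sum_x u_1^N u_2^N$, which must be exploited in \emph{time-integrated} form via the two dissipation identities, and through the constraint $K\le\de(\log N)^{1/2}$, which makes $e^{cK}=o(N^\e)$ (needed wherever the lower bound $u_i^N\ge e^{-C(T)K}$ appears, e.g.\ in the Kawasaki currents) and $e^{cK^2}=N^{O(\de^2)}$ (so that neither the Gronwall amplification nor the exponential-moment estimates destroy the $o(N^d)$ bound, provided $\de$ is chosen small depending on $T$). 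Establishing the a priori bounds for \eqref{eq:HD-discre} with the correct $K$-dependence, and carrying the $K$'s consistently through every step of the classical computation so that the reactive bound comes out $o(N^d)$ rather than merely $O(N^d)$, is where the bulk of the work lies.
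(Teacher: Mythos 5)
Your overall architecture (the Jara--Menezes inequality \eqref{eq:dH}, the exact cancellation of the linear terms thanks to the discretized equation \eqref{eq:HD-discre}, the identification of the residual quadratic Glauber term $V(t)$ of \eqref{eq:1.2b}, Gronwall with constant $O(K^2)$ and the observation $e^{CK^2t}\le N^{C\de^2 t}$) matches the paper. The genuine gap is in how you estimate $\int V(t)\,d\mu_t$ (and, implicitly, $\int (V_1+V_2)\,d\mu_t$). You apply the entropy inequality directly to the single-site sum, and since each summand has $\nu_t$-amplitude of order $K$ you are forced to take $\ga\lesssim K^{-1}$ (you take $\ga\sim K^{-2}$); the exponential-moment term then yields, after the dissipation identity, a time-integrated error of order $N^d/K$ at best. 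This is indeed $o(N^d)$ in isolation, but it does not survive the Gronwall amplification: the conclusion is $H(\mu_T|\nu_T)\le e^{CK^2T}\bigl(O(N^{d-\de_0})+O(N^d/K)\bigr)$, and since $e^{CK^2T}=N^{C\de^2T}$ grows like a positive power of $N$ while $K\le\de(\log N)^{1/2}$ is only polylogarithmic, the second term is $N^{d+C\de^2T}/K\gg N^d$ for any $\de,T>0$. A gain that is merely a power of $K$ (i.e.\ polylogarithmic in $N$) can never beat the Gronwall factor; one needs the error in \eqref{2.9}--\eqref{2.10} to be smaller than $N^d$ by a fixed \emph{power of $N$}.

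This is exactly what the paper's two-step scheme in Section \ref{sec:3} provides and what your plan omits: first replace $V$ by its local sample average $V^\ell$ over blocks of side $\ell=N^{\frac1d-\k'}$ (Proposition \ref{prop:2.9}), paying a replacement cost that is controlled by the negative Dirichlet form in \eqref{eq:dH} via the flow lemma (Lemma \ref{lem:flow}) and the integration-by-parts Lemmas \ref{lem:IP-0}--\ref{lem:4.5}; then apply the entropy plus concentration inequality to $V^\ell$ (Proposition \ref{prop:4.7}), whose summands are products of block averages with variance $O(\ell^{-d})$, so that one may take $\ga\sim\ell^d$ and obtain the error $C_\k N^{d-1+\k}$, which does survive $e^{CK^2T}=N^{C\de^2T}$ once $\de$ is small depending on $T$. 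The same block-averaging is needed for $V_1,V_2$, whose coefficients $N^2(u_i(y)-u_i(x))^2$ are $O(K^2)$ by Proposition \ref{prop:2.2}; your proposed shortcut of a Cauchy--Schwarz against the Dirichlet form with an $H_{-1}$ bound is not available for these quadratic fluctuation terms either. Your dissipation identities and the propagation of (A1) are correct and useful elsewhere (they reappear in Section \ref{sec:5}), but they cannot replace the one-block estimate here.
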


The proof of this theorem needs some preliminary results proved in the following subsections.

\subsection{Calculation of the second term in \eqref{eq:dH}}

We define the normalized variables $\om_{i,x,t}$ by
		\begin{equation}
		\label{2.3b}
\om_{i,x,t} = \frac{\bar\si_{i,x}(t)}{\chi(u_i(t,x))},  \qquad \bar{\si}_{i,x}(t) = \si_{i,x}(t)-u_i(x,t), \quad i=1, 2,
	\end{equation}
where $\chi(u)=u(1-u)$ for $u\in (0,1)$.

In this subsection we prove the following proposition.
\begin{prop} \label{prop:LG1}

	\begin{equation}
	\label{eq:1.2}L_N^{*,\nu_t}1 - \partial_t \log\psi_t
= V_1(t)+V_2(t)+V(t)\end{equation}
with
	\begin{eqnarray}
&&	V_i(t)= -\frac{d_iN^2}2 \sum_{x,y\in\T_N^d:|x-y|=1}  (u_i(t,y)-u_i(t,x))^2 \om_{i,x,t} \om_{i,y,t},\qquad i=1,2  \label{eq:1.2a}
\\&& V(t) =K \sum_{x\in \T_N^d} \big(u_1(t,x)+u_2(t,x)-1\big)
u_1(t,x)u_2(t,x) \om_{1,x,t}\om_{2,x,t}.
 \label{eq:1.2b}
	\end{eqnarray}

\end{prop}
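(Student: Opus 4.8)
The plan is to compute $L_N^{*,\nu_t}1$ and $\partial_t\log\psi_t$ separately and then combine them. Since $L_N = N^2L_0 + KL_G$, we split the adjoint as $L_N^{*,\nu_t}1 = N^2 L_0^{*,\nu_t}1 + K L_G^{*,\nu_t}1$, and similarly the time derivative will split according to the two drift mechanisms in the discretized equation \eqref{eq:HD-discre}. The standard route is to fix a reference measure $m$, say the uniform (or any fixed product) measure on $\mathcal{X}_N^2$, write $\psi_t = d\nu_t/dm$ explicitly as a product over sites $x$ of factors $u_i(t,x)^{\si_{i,x}}(1-u_i(t,x))^{1-\si_{i,x}}$ (up to the constant from $m$), and differentiate $\log\psi_t$ in $t$. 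One finds
$$
\partial_t \log\psi_t = \sum_{i=1,2}\sum_{x\in\T_N^d}\Big(\frac{\si_{i,x}}{u_i(t,x)} - \frac{1-\si_{i,x}}{1-u_i(t,x)}\Big)\partial_t u_i(t,x)
= \sum_{i=1,2}\sum_{x\in\T_N^d}\frac{\bar\si_{i,x}(t)}{\chi(u_i(t,x))}\partial_t u_i(t,x),
$$
i.e. $\partial_t\log\psi_t = \sum_{i,x}\om_{i,x,t}\,\partial_t u_i(t,x)$, where I substitute $\partial_t u_i(t,x) = d_i\De^N u_i^N(t,x) - K u_1^N(t,x)u_2^N(t,x)$ from \eqref{eq:HD-discre}.

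Next I would compute the adjoint terms. For the Kawasaki part, $L_0^{*,\nu_t}1$ is a standard computation: since the exchange $\si\mapsto\si^{x,y}$ is an involution, one gets for each bond $\{x,y\}$ a contribution involving the ratio of Bernoulli weights, and a Taylor-type bookkeeping produces $N^2 L_0^{*,\nu_t}1$ equal to a ``linear'' part that reproduces exactly $\sum_{i,x}\om_{i,x,t}\,d_i\De^N u_i(t,x)$ (thus cancelling the corresponding part of $\partial_t\log\psi_t$) plus a genuinely ``quadratic'' remainder, which is precisely $V_1(t)+V_2(t)$ as in \eqref{eq:1.2a}: the terms $(u_i(t,y)-u_i(t,x))^2\om_{i,x,t}\om_{i,y,t}$ arise because on a bond one picks up a product of two normalized occupation variables at the two endpoints when expanding the log of the weight ratio to second order. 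For the Glauber/killing part, $L_G$ flips $\si_{1,x}$ and $\si_{2,x}$ simultaneously at rate $\si_{1,x}\si_{2,x}$, so $L_G^{*,\nu_t}1$ at site $x$ is obtained by pushing the rate through the flip: the adjoint rate becomes $(1-\si_{1,x})(1-\si_{2,x})$ weighted by the ratio of Bernoulli masses $\frac{u_1(t,x)u_2(t,x)}{(1-u_1(t,x))(1-u_2(t,x))}$, minus $\si_{1,x}\si_{2,x}$. Expanding this difference in terms of $\bar\si_{i,x}$ and simplifying, the part linear in the $\bar\si$'s cancels with $-K\sum_{i,x}\om_{i,x,t}u_1(t,x)u_2(t,x)$ coming from the nonlinear term of \eqref{eq:HD-discre}, and what survives is the bilinear term $K\sum_x(u_1+u_2-1)u_1u_2\,\om_{1,x,t}\om_{2,x,t}$, i.e. $V(t)$ as in \eqref{eq:1.2b}.

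The main obstacle is purely algebraic: carrying out the two expansions so that the ``first-order'' pieces of $L_0^{*,\nu_t}1$ and of $KL_G^{*,\nu_t}1$ match exactly the two drift terms in $\partial_t\log\psi_t$ coming from \eqref{eq:HD-discre}, leaving only the specified quadratic remainders. For the Kawasaki part this is a careful but routine second-order expansion of $\log$ of a product of Bernoulli weights along a bond flip (the key identity being that after cancellation of the first-order and the pure-$\bar\si_{i,x}^2$ contributions using $\chi(u)=u(1-u)$, only the cross term $\om_{i,x,t}\om_{i,y,t}$ times $(u_i(t,y)-u_i(t,x))^2$ remains, with the correct sign and the factor $d_iN^2/2$). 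For the Glauber part the delicate point is tracking the asymmetry between the forward rate $\si_{1,x}\si_{2,x}$ and the reversed rate: writing $\si_{i,x}=u_i(t,x)+\bar\si_{i,x}$ and $(1-\si_{i,x})\frac{u_i}{1-u_i} = u_i - \frac{u_i}{1-u_i}\bar\si_{i,x}$, one expands the product, uses $\bar\si_{i,x} = \chi(u_i(t,x))\om_{i,x,t}$, and checks that the constant terms cancel, the terms linear in a single $\om$ cancel against the discretized reaction term, and the only surviving bilinear term is $\om_{1,x,t}\om_{2,x,t}$ times $u_1u_2$ times the combination $(u_1+u_2-1)$ (note $\bar\si_{1,x}\bar\si_{2,x}$ being an honest product of independent-mean variables under $\nu_t$ is what forces the bilinear-in-$\om$ form rather than something reducible). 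Assembling the three surviving pieces gives \eqref{eq:1.2}, \eqref{eq:1.2a}, \eqref{eq:1.2b}.
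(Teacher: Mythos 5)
Your proposal is correct and follows essentially the same route as the paper: compute $K L_G^{*,\nu_t}1$ explicitly via the flip change of variables and the Bernoulli weight ratio $\frac{u_1u_2}{(1-u_1)(1-u_2)}$, quote the standard Kawasaki computation of $L_0^{*,\nu_t}1$, identify $\partial_t\log\psi_t=\sum_{i,x}\om_{i,x,t}\partial_t u_i(t,x)$, and cancel all terms linear in $\om$ using \eqref{eq:HD-discre}. One minor wording point: the Kawasaki adjoint is obtained from the exact (multilinear) identity for $\nu_t(\si^{x,y})/\nu_t(\si)-1$ rather than from a Taylor expansion of a logarithm of the weight ratio, but the linear/quadratic decomposition and the resulting $V_1+V_2$ you state are exactly the ones in the paper.
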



\begin{proof} We first compute $L_N^{*,\nu_t}g$ for a generic function  $g$
on $\mathcal{X}_N^2$, calling $\nu= \nu_{u_1(\cdot),u_2(\cdot)}$.

For $L_G$ we have
\begin{align*}
\int g L_G f d \nu
= \sum_{\si_1,\si_2} g(\si_1,\si_2) \sum_{x\in\T_N^d}
\si_{1,x}\si_{2,x} \left\{  f(\si_1^x,\si_2^x)  -f(\si_1,\si_2) \right\}
\nu(\si_1,\si_2).
\end{align*}
By making change of variables $\eta_1=\si_1^x$ and
$\eta_2=\si_2^x$, the sum containing $f(\si_1^x,\si_2^x)$
can be rewritten as
\begin{align*}
\sum_{\eta_1,\eta_2} g(\eta_1^x,\eta_2^x) \sum_{x\in\T_N^d}
(1-\eta_{1,x})(1-\eta_{2,x}) f(\eta_1,\eta_2)
\nu(\eta_1^x,\eta_2^x).
\end{align*}
Next observe that for $\eta_1, \eta_2$ satisfying $\eta_{1,x}=\eta_{2,x}=0$,
$$
\nu(\eta_1^x,\eta_2^x) =
\frac{u_1(x)u_2(x)}{(1-u_1(x))(1-u_2(x))}
\nu(\eta_1,\eta_2),
$$
Thus
\begin{align*}
 L_G^{*,\nu} g(\si_1,\si_2)
= \sum_{x\in \T_N^d} & \Bigg\{ \frac{u_1(x)u_2(x)}{(1-u_1(x))(1-u_2(x))}
(1-\si_{1,x})(1-\si_{2,x}) g(\si_1^x,\si_2^x) \\
& \hskip 60mm
- \si_{1,x}\si_{2,x} g(\si_1,\si_2) \Bigg\}.
\end{align*}
Using the above equality with $g\equiv 1$ and writing $\bar\si_{i,x} = \om_{i,x,t} \chi(u_i(\cdot))$ (recall \eqref{2.3b})
we get
%
%
\begin{align*}
K L_G^{*,\nu_{u_1(\cdot),u_2(\cdot)}} 1
= & - K \sum_{x\in \T_N^d} u_1(x)u_2(x) \big(\om_{1,x}+\om_{2,x}\big)\\
& + K \sum_{x\in \T_N^d} \big(u_1(x)+u_2(x)-1\big)
u_1(x)u_2(x) \om_{1,x}\om_{2,x}.
\end{align*}

For the Kawasaki part, from the computation in \cite{F18} or \cite{FT}, we obtain

\begin{align*}
L_0^{*,\nu}1 = & -\frac{d_1}2 \sumtwo{x,y\in\T_N^d}{|x-y|=1}  (u_1(y)-u_1(x))^2 \om_{1,x} \om_{1,y}
+ d_1\sum_{x\in\T_N^d} (\De u_1)(x) \om_{1,x} \\
& -\frac{d_2}2 \sumtwo{x,y\in\T_N^d}{|x-y|=1}  (u_2(y)-u_2(x))^2
\om_{2,x} \om_{2,y}
+ d_2\sum_{x\in\T_N^d} (\De u_2)(x) \om_{2,x}.
\end{align*}
We next observe that
\begin{equation*}
\partial_t \log\psi_t(\eta) = \sum_{x\in\T_N^d} \partial_t u_1(t,x) \om_{1,x,t}
+ \sum_{x\in\T_N^d} \partial_t u_2(t,x) \om_{2,x,t}.
\end{equation*}
this equality is proved similarly to \cite{F18} or \cite{FT}.
By using \eqref{eq:HD-discre} the linear terms in $\om$ cancel and we finally obtain \eqref{eq:1.2}. \end{proof}


\subsection{Estimates on the solution of \eqref{eq:HD-discre}}
\label{subsec:2.2}

Let $(u_1(t),u_2(t))=(u_1^N(t,x),u_2^N(t,x))$ be the solution of the discretized hydrodynamic equation
\eqref{eq:HD-discre}.  We derive estimates on $(u_1(t),u_2(t))$ and their gradients.
First two lemmas, especially taking $c=c_2<1$ with $c_2$ in (A1),  are useful to estimate
$1/\chi(u_i^N(t,x))$ appearing in the definition of $\om_{i,x,t}$ from above.

\begin{lem}  \label{lem:3.1}
If the initial values satisfy $0\le u_i^N(0,x)\le c$ for every $x \in \T_N^d$ and $i=1,2$
with some $c>0$, we have
$$
0\le u_i^N(t,x) \le c,
$$
for every $t\ge 0$, $x \in \T_N^d$ and $i=1,2$.
\end{lem}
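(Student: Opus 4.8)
The plan is to regard \eqref{eq:HD-discre}, for fixed $N$, as an autonomous system of $2N^d$ ordinary differential equations $\partial_t u = F^N(u)$ on $\R^{\T_N^d}\times\R^{\T_N^d}$, whose right-hand side is affine in the diffusion term and quadratic—hence locally Lipschitz—in the killing term. Thus there is a unique solution on a maximal interval $[0,T_{\max})$, and it suffices to establish the two-sided bound $0\le u_i^N(t,x)\le c$ on that interval: an a priori $L^\infty$-bound rules out blow-up and, by the standard continuation criterion, forces $T_{\max}=\infty$. The whole argument is a discrete maximum principle, which I would phrase as forward invariance of the box $B=[0,c]^{\T_N^d}\times[0,c]^{\T_N^d}$ under the flow of $F^N$.

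For the lower bound I would first show each $u_i^N$ stays nonnegative, noting that this needs no information on the other component. At a state $u$ with all coordinates $\ge 0$ and with $u_{i,x}=0$ for some $i,x$, the killing term $-Ku_{1,x}u_{2,x}$ vanishes (its factor $u_{i,x}$ is zero), while $(\De^N u_i)(x)=N^2\sum_{y\in\T_N^d:\,|y-x|=1}u_{i,y}\ge 0$; hence the $(i,x)$-component of $F^N$ is $\ge 0$ there. This is exactly the quasi-positivity (Kamke) condition that makes the nonnegative orthant forward invariant, so $u_i^N(0,\cdot)\ge 0$ gives $u_i^N(t,\cdot)\ge 0$. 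If one wishes to avoid quoting Nagumo's invariance theorem, the same follows by perturbation: solve $\partial_t u_i^\e = d_i\De^N u_i^\e - Ku_1^\e u_2^\e+\e$, $i=1,2$, with the same data; no coordinate can then reach $0$ because the derivative is strictly positive there; let $\e\downarrow 0$ using continuous dependence on the equation.

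Once $u_i^N\ge 0$ is known, the upper bound is immediate: the killing term $-Ku_1^Nu_2^N$ is $\le 0$, so $\partial_t u_i^N\le d_i\De^N u_i^N$, i.e.\ $u_i^N$ is a subsolution of the discrete heat equation. Since $d_i\De^N$ generates a positivity- and constant-preserving, $L^\infty$-contractive (Markov) semigroup on $\R^{\T_N^d}$, comparison with the genuine solution of the discrete heat equation started from $u_i^N(0,\cdot)$ yields $u_i^N(t,x)\le\max_{y\in\T_N^d}u_i^N(0,y)\le c$. Equivalently one applies the discrete maximum principle to $v_i:=c-u_i^N\ge 0$, which is a supersolution. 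Combining this with the lower bound gives $u_i^N(t,\cdot)\in B$, hence global existence, as noted above.

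I expect the only point requiring genuine care to be the rigor of this invariance/comparison step, since the naive argument—``at the first time a coordinate touches the boundary of $B$ its derivative points inward, so it cannot cross''—is not by itself conclusive: $\partial_t u_{i,x}\ge 0$ at a contact time does not preclude the coordinate from dipping below $0$ afterwards. Invoking Nagumo's theorem or carrying out the $\e$-perturbation above closes this gap, and for each fixed finite $N$ the whole argument is elementary.
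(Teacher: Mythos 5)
Your argument is correct, and at bottom it is the same discrete maximum principle that the paper invokes: the published proof simply cites \cite{CDHMN}, Lemma 2.1, and remarks that the contact-time argument used for Lemma \ref{lem:3.2} also works here. Where you differ is in execution and in rigor. You first settle global well-posedness of the ODE system (which the paper takes for granted), you obtain the lower bound from quasi-positivity of the vector field on the nonnegative orthant, closing the invariance step by Nagumo or an $\e$-perturbation, and you get the upper bound by Duhamel comparison with the positivity- and constant-preserving semigroup $e^{td_i\De^N}$ rather than by a second contact-time argument; the order matters and you respect it, since $-Ku_1u_2\le 0$ is only available once nonnegativity of both components is known. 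Your closing caveat is well taken and in fact applies verbatim to the paper's own proof of Lemma \ref{lem:3.2}, which concludes from $\partial_t\big(u_1(t,x)-\underline{u}(t)\big)\ge 0$ at a single touching point that the difference ``is increasing'' and hence cannot cross zero; as you observe, a nonstrict sign at one instant does not by itself preclude crossing, and your $\e$-regularization (equivalently, comparing against a strictly smaller barrier) is the standard way to make that step airtight. In short: same underlying idea, carried out more carefully than the paper's one-line proof.
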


\begin{proof}
One can apply the maximum principle in our discrete setting, cf.\ \cite{CDHMN}, Lemma 2.1.
Also, a similar argument to the proof of the next lemma works.
\end{proof}

\begin{lem}  \label{lem:3.2}
If the initial values satisfy $0< u_0\le u_i^N(0,x) \le 1$ for every $x \in \T_N^d$ and $i=1,2$, we have
$$
u_i^N(t,x) \ge \underbar u(t) := u_0e^{-Kt},
$$
for every $t\ge 0$, $x \in \T_N^d$ and $i=1,2$.
\end{lem}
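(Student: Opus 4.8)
The plan is to prove the lower bound $u_i^N(t,x) \ge \underbar u(t) := u_0 e^{-Kt}$ by a direct comparison argument at the discrete level, exploiting the structure of the reaction term $-Ku_1^N u_2^N$ together with the maximum principle for the discrete Laplacian. First I would record that since each $u_i^N$ is bounded above by $1$ (by Lemma~\ref{lem:3.1} applied with $c=1$, or directly from the fact that the initial data lie in $[0,1]$), the reaction term satisfies $-K u_1^N(t,x) u_2^N(t,x) \ge -K u_i^N(t,x)$ for $i=1,2$ and every $x$, $t$. Hence each component is a supersolution of the linear equation
\begin{equation*}
\partial_t v(t,x) = d_i \De^N v(t,x) - K v(t,x), \qquad x\in\T_N^d,
\end{equation*}
with initial datum $v(0,\cdot) = u_i^N(0,\cdot) \ge u_0$.

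Next I would compare with the spatially constant subsolution $\underbar u(t) = u_0 e^{-Kt}$. Since $\De^N \underbar u(t) = 0$ (a constant is annihilated by the discrete Laplacian) and $\partial_t \underbar u(t) = -K\underbar u(t)$, the function $\underbar u(t)$ solves the linear equation above exactly, with $\underbar u(0) = u_0 \le u_i^N(0,x)$. Setting $z_i(t,x) := u_i^N(t,x) - \underbar u(t)$, one obtains the differential inequality
\begin{equation*}
\partial_t z_i(t,x) \ge d_i \De^N z_i(t,x) - K z_i(t,x), \qquad z_i(0,x) \ge 0,
\end{equation*}
where the inequality at time $0$ is in the sense of componentwise nonnegativity on the finite set $\T_N^d$.

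The conclusion $z_i(t,x) \ge 0$ for all $t\ge 0$ then follows from the discrete maximum principle exactly as in \cite{CDHMN}, Lemma~2.1; alternatively, one can argue directly: let $m(t) := \min_{x\in\T_N^d} z_i(t,x)$, which is a continuous, piecewise-$C^1$ function of $t$ on the finite lattice, and at a point $x^*(t)$ realizing the minimum one has $\De^N z_i(t,x^*) \ge 0$, so that $\frac{d}{dt} m(t) \ge -K m(t)$ wherever the derivative exists, whence $m(t) \ge m(0) e^{-Kt} \ge 0$ by Gronwall. This gives $u_i^N(t,x) \ge \underbar u(t)$ as claimed. The one point requiring a little care — and the only real obstacle — is justifying the differentiation of $\min_x z_i(t,x)$ and the sign of $\De^N$ at the minimizing site; this is standard for ODE systems on a finite index set (the minimum of finitely many $C^1$ functions is Lipschitz, and its Dini derivative is controlled by the derivatives of the active components), and the referenced lemma in \cite{CDHMN} already packages it, so no genuine difficulty arises. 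I would therefore present the short comparison computation and invoke the discrete maximum principle, matching the style of the proof of Lemma~\ref{lem:3.1}.
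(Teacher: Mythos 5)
Your proposal is correct and follows essentially the same route as the paper: bound the reaction term via $u_{3-i}^N\le 1$, compare with the spatially constant solution $\underbar u(t)=u_0e^{-Kt}$ of the linear equation, and invoke the discrete maximum principle. The only (cosmetic) difference is in the last step, where the paper argues at a first touching time while you run Gronwall on $m(t)=\min_x\bigl(u_i^N(t,x)-\underbar u(t)\bigr)$ using $\De^N\ge 0$ at a minimizer; your version is, if anything, the more carefully justified of the two.
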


\begin{proof}
From \eqref{eq:HD-discre} and $u_2(t,x) \le 1$, since $\underbar u(t)$ satisfies
$\partial_t \underbar u(t) = -K\underbar u(t)$, we have
\begin{align*}
\partial_t \big(u_1(t,x) -\underbar u(t)\big)
&= d_1 \De^N \big(u_1(t,x)- \underbar u(t)\big)
 -K \big( u_1(t,x)u_2(t,x) -\underbar u(t)\big) \\
& \ge d_1 \De^N \big(u_1(t,x)- \underbar u(t)\big)
 -K \big( u_1(t,x) -\underbar u(t)\big).
\end{align*}
Assume that $u_1(s,y)> \underbar u(s)$ holds for $0\le s < t$ and every $y\in \T_N^d$, and at some $x$
and $t$, $u_1(t,x)= \underbar u(t)$ holds.  Then, $\De^N \big(u_1(t,x)- \underbar u(t)\big)\ge 0$
and $-K \big( u_1(t,x) -\underbar u(t)\big) =0$.  Therefore,
$\partial_t \big(u_1(t,x) -\underbar u(t)\big)\ge 0$.  This means that $u_1(t,x) -\underbar u(t)$
is increasing and $u_1(t,x)$ can not be below $\underbar u(t)$.  Same argument works
for $u_2(t,x)$.
\end{proof}

Let $p^N(t,x,y)$ be the discrete heat kernel corresponding to
$\De^N$ on $\T_N^d$.  Then, we have the following estimate, which is
global in $t$.

\begin{lem} \label{lem:2.1}
There exist $C, c>0$ such that
$$
|\nabla^N p^N(t,x,y)|
\le \frac{C}{\sqrt{t}} p^N(ct,x,y), \quad t>0,
$$
where $\nabla^N$ is defined by \eqref{eq:nablaN}.
\end{lem}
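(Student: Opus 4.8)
The plan is to prove the Gaussian-type gradient bound for the discrete heat kernel $p^N(t,x,y)$ on $\T_N^d$ by a scaling argument that reduces it to the corresponding statement for the standard (continuous-time, rate-one) random walk semigroup on $\Z^d$. First I would note that $p^N(t,x,y)$ is the transition kernel of the continuous-time simple random walk on $\T_N^d$ speeded up by $N^2$, i.e.\ $p^N(t,x,y) = \bar p(N^2 t, x, y)$ where $\bar p(s,\cdot,\cdot)$ is the heat kernel for the generator $\De$ (jump rate $1$ across each edge) on $\T_N^d$. In turn $\bar p$ on the torus is obtained from the heat kernel $q(s,z)$ of the simple random walk on the whole lattice $\Z^d$ by periodization, $\bar p(s,x,y) = \sum_{k\in\Z^d} q(s, y-x+Nk)$. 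Since $\nabla^N$ carries an extra factor $N$, the inequality we must establish is, after the substitution $s=N^2t$, equivalent to
$$
\Bigl| \sum_{k} \nabla_z q(s, y-x+Nk) \Bigr| \le \frac{C}{\sqrt{s}} \sum_{k} q(cs, y-x+Nk),
$$
where $\nabla_z$ is the unit discrete gradient on $\Z^d$. Thus it suffices to prove the off-torus bound $|\nabla_z q(s,z)| \le \frac{C}{\sqrt s} q(cs,z)$ uniformly in $s>0$, $z\in\Z^d$, and then sum over $k$.

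The core estimate $|\nabla_z q(s,z)| \le \frac{C}{\sqrt s}\, q(cs,z)$ is classical and I would obtain it from standard Gaussian bounds for the lattice heat kernel: there are constants such that $c_- s^{-d/2} e^{-C_-|z|^2/s} \le q(s,z) \le c_+ s^{-d/2} e^{-C_+|z|^2/s}$ for $|z| \le s$ together with a complementary sub-Gaussian bound $q(s,z) \le C e^{-c|z|\log(|z|/s)}$ for $|z| > s$ (these can be quoted, e.g., from Lawler's book or derived via a local CLT / the parabolic Harnack inequality of Delmotte). For the gradient one has the matching bound $|\nabla_z q(s,z)| \le C s^{-1/2} \cdot s^{-d/2} e^{-c|z|^2/s}$ in the Gaussian regime and a similar decaying bound for $|z|>s$; combining the upper bound on $|\nabla_z q(s,z)|$ with the \emph{lower} Gaussian bound on $q(cs,z)$ — where the constant $c<1$ is chosen small enough that $e^{-c_1|z|^2/s} \le e^{-C_-|z|^2/(cs)}$, i.e.\ so that the (larger) Gaussian denominator at time $cs$ dominates — yields exactly $|\nabla_z q(s,z)| \le C s^{-1/2} q(cs,z)$. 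A small separate argument handles short times $s \le 1$ and the region $|z|>s$, where both sides are controlled directly by the exponential tail bounds. Finally, because $c<1$, periodizing preserves the inequality: $\sum_k |\nabla q(s,y-x+Nk)| \le \frac{C}{\sqrt s}\sum_k q(cs, y-x+Nk)$, and translating back $s = N^2 t$ recovers the claimed bound with $\sqrt{s}=N\sqrt t$, the $N$ being absorbed into $\nabla^N$.

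The main obstacle I anticipate is not any deep point but rather bookkeeping: making the two-regime (diffusive $|z|\lesssim s$ versus ballistic/Poissonian $|z|\gtrsim s$) Gaussian estimates fit together cleanly so that a \emph{single} pair of constants $(C,c)$ works for all $t>0$ and all $N$, and in particular verifying that the periodization step does not degrade the constant (it does not, since the map $a\mapsto q(cs,a)$ is being summed over the same coset and $c$ is fixed once and for all). One could alternatively avoid periodization subtleties entirely by working directly on $\T_N^d$ via the spectral representation $\bar p(s,x,y)=N^{-d}\sum_{\xi} e^{-s\lambda(\xi)} e^{2\pi i \xi\cdot(x-y)/N}$ and Fourier-analytic estimates on $\lambda(\xi)=\sum_j(2-2\cos(2\pi\xi_j/N))$, but the comparison-with-$\Z^d$ route is cleaner and the needed input is entirely off the shelf.
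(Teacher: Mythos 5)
Your reduction is exactly the one the paper uses: write $p^N(t,x,y)$ as the periodization of the whole-lattice kernel at time $N^2t$, prove the gradient bound on $\Z^d$, and sum over the periods. (The paper simply quotes the $\Z^d$ estimate, in the form $|\nabla p(t,x,y)|\le C(1\vee t)^{-1/2}p(ct,x,y)$, from \cite{DD}, Theorem 1.1 (1.4), rather than rederiving it.) The periodization and the rescaling $s=N^2t$, including your remark that the $\sqrt{1\vee s}$ versus $\sqrt{s}$ discrepancy at short times is absorbed after multiplying by $N$, are fine.

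The problem is in the one step where you do actual work. In $|\nabla_z q(s,z)|\le Cs^{-1/2}q(cs,z)$ the constant $c$ must be taken \emph{large}, not small: you are dominating the gradient by the kernel at a dilated time precisely because only then is the Gaussian (resp.\ Poissonian) tail on the right fat enough. Concretely, if $q(s,z)\ge B s^{-d/2}e^{-b|z|^2/s}$ is the lower bound and $|\nabla q(s,z)|\le A's^{-(d+1)/2}e^{-a'|z|^2/s}$ the gradient upper bound, matching exponents requires $a'\ge b/c$, i.e.\ $c\ge b/a'\ge 1$, since the lower-bound exponent $b$ always exceeds the upper-bound exponent $a'$. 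Your displayed condition $e^{-c_1|z|^2/s}\le e^{-C_-|z|^2/(cs)}$ likewise forces $c\ge C_-/c_1>1$ --- the opposite of ``$c<1$ chosen small enough''. And with $c<1$ the inequality is genuinely false in the large-deviation regime: for fixed $s$ and $z=ne_1$ one has $|\nabla q(s,ne_1)|\asymp q(s,ne_1)\asymp \mathrm{const}\cdot s^n/n!$, while $q(cs,ne_1)/q(s,ne_1)$ decays geometrically in $n$ for small $c$, so no constant $C$ works. Relatedly, ``because $c<1$, periodizing preserves the inequality'' is a non sequitur: periodization is termwise summation of a pointwise inequality and works for any $c>0$. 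The fix is simply to reverse the role of $c$ (take it large); with that correction, or by citing \cite{DD} directly as the paper does, your argument goes through.
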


\begin{proof}
Let $p(t,x,y)$ be the heat kernel corresponding to the discrete Laplacian
$\De$ on $\Z^d$.  Then, we have the estimate
$$
|\nabla p(t,x,y)|
\le \frac{C}{\sqrt{1\vee t}} p(ct,x,y), \quad t >0, \; x, y \in \Z^d,
$$
with some constants $C, c>0$, independent of $t$ and $x, y$, where $\nabla=\nabla^1$.
This should be well-known, but we refer to \cite{DD} Theorem 1.1 (1.4)
which discusses general case with random coefficients, see also \cite{SZ}.
Then, since
$$
p^N(t,x,y) = \sum_{k\in (N\Z)^d} p(N^2t,x,y+k),
$$
the result follows.
\end{proof}

We have the following estimate, though it might not be the best possible one.

\begin{prop}  \label{prop:2.2}
The gradients of the solution of \eqref{eq:HD-discre} are estimated as
$$
|\nabla^N u_i^N(t,x)| \le K(C_0+C \sqrt{t}), \quad t >0, \; i=1, 2,
$$
if $|\nabla^N u_i^N(0,x)| \le C_0 K$ holds.
\end{prop}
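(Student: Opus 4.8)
The plan is to use the Duhamel (variation-of-constants) representation of the solution of the discretized hydrodynamic equation \eqref{eq:HD-discre} in terms of the discrete heat kernel $p^N(t,x,y)$, apply the discrete gradient $\nabla^N$, and estimate using the kernel gradient bound of Lemma \ref{lem:2.1}. Concretely, writing $S_i^N(t)$ for the semigroup generated by $d_i\De^N$, we have
$$
u_i^N(t,x) = \big(S_i^N(t)u_i^N(0,\cdot)\big)(x)
 - K\int_0^t \big(S_i^N(t-s)\big(u_1^N(s,\cdot)u_2^N(s,\cdot)\big)\big)(x)\,ds,
$$
and the kernel of $S_i^N(t)$ is $p^N(d_it,x,y)$.

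First I would apply $\nabla^N$ (acting on the $x$-variable) to both terms. For the first term, $\nabla^N$ can be moved onto the kernel or, more simply, onto the initial data: since $S_i^N(t)$ commutes with $\nabla^N$ (both are translation-invariant operators on $\T_N^d$) and $p^N(d_it,x,\cdot)$ is a probability kernel, we get $|\nabla^N S_i^N(t)u_i^N(0,\cdot)(x)| \le \max_y|\nabla^N u_i^N(0,y)| \le C_0K$ by hypothesis (A1). For the Duhamel term, I would instead put the gradient on the kernel, so that the bound becomes
$$
K\int_0^t \sum_{y\in\T_N^d}|\nabla^N p^N(d_i(t-s),x,y)|\,u_1^N(s,y)u_2^N(s,y)\,dy\,ds.
$$
By Lemma \ref{lem:3.1} (with $c=c_2<1$, from (A1), which propagates in time) we have $0\le u_1^N(s,y)u_2^N(s,y)\le c_2^2\le 1$, uniformly in $s,y$; combining this with Lemma \ref{lem:2.1} applied to the rescaled kernel $p^N(d_i(t-s),\cdot,\cdot)$ gives
$$
\sum_y |\nabla^N p^N(d_i(t-s),x,y)| \le \frac{C}{\sqrt{d_i(t-s)}}\sum_y p^N(c d_i(t-s),x,y) = \frac{C'}{\sqrt{t-s}}.
$$
Then $K\int_0^t \frac{C'}{\sqrt{t-s}}\,ds = 2C'K\sqrt{t}$, so altogether $|\nabla^N u_i^N(t,x)| \le C_0K + CK\sqrt{t} = K(C_0 + C\sqrt{t})$, as claimed, with the constant $C$ absorbing $d_i$, $c_2$ and the heat-kernel constants.

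The main obstacle is making the gradient-on-kernel manipulation fully rigorous on the torus $\T_N^d$: one must check that $\nabla^N$ applied in the $x$-variable of $p^N(t,x,y)$ can be transferred, after summation against a test function, without boundary issues — which is immediate here by translation invariance and the periodic structure — and that the integrability near $s=t$ of $(t-s)^{-1/2}$ is genuinely what Lemma \ref{lem:2.1} delivers after we absorb the factor $d_i$ into the constants (the lemma is stated for $\De^N$, and $d_i\De^N$ just rescales $t$, which only changes $C,c$). A secondary point is that the crude bound $u_1^N u_2^N\le 1$ throws away the decay coming from Lemma \ref{lem:3.2}; this is harmless for the stated estimate (and the proposition's own phrasing concedes it "might not be the best possible one"), but it is worth noting that one cannot do better by this argument without more information on the product near the interface. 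Everything else is a routine application of Duhamel's formula plus the two cited lemmas.
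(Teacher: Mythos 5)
Your proposal is correct and follows essentially the same route as the paper: Duhamel's formula, the gradient transferred onto the initial data for the homogeneous term (the paper invokes the symmetry of $p^N$ in $(x,y)$ where you invoke translation invariance, which is equivalent), and the gradient placed on the kernel for the reaction term, bounded via Lemma \ref{lem:2.1} and $0\le u_1^Nu_2^N\le 1$ to produce the $K\int_0^t(t-s)^{-1/2}ds=2K\sqrt{t}$ contribution. The only blemish is the stray ``$dy$'' in a discrete sum over $y$; otherwise nothing to add.
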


\begin{proof}
From Duhamel's formula, we have
$$
u_i^N(t,x) = \sum_{y\in\T_N^d} u_i^N(0,y)p^N(d_it,x,y)
-K \int_0^t ds \sum_{y\in\T_N^d} u_1^N(s,y)u_2^N(s,y) p^N(d_i(t-s),x,y).
$$
By noting the symmetry of $p^N$ in $(x,y)$ and
$0\le u_1^N(s,x), u_2^N(s,x) \le 1$, this shows
$$
|\nabla^N u_i^N(t,x)| \le \sum_{y\in\T_N^d} |\nabla^N u_i^N(0,y)|p^N(d_it,x,y)
+K \int_0^t ds \sum_{y\in\T_N^d} |\nabla^N p^N(d_i(t-s),x,y)|.
$$
Thus, from Lemma \ref{lem:2.1}, we obtain the desired estimate.
\end{proof}

\subsection{Proof of Theorem \ref{thm:EstHent}}

{\bf {Notation:}} We simply denote $\mu_t=\mu_t^N, \nu_t = \nu_t^N$
and set $f_t\equiv f_t^N:= \frac{d\mu_t^N}{d\nu_t^N}$.

Recalling Proposition \ref{prop:LG1}, and using the estimates of subsection \ref{subsec:2.2}, in Section \ref{sec:3} we prove the following Theorem.

\begin{theorem} \label{thm:V3}
For $\alpha$ and $\kappa>0$ small, there is $C_{\a,\k} >0$ so that
 	\begin{equation}
	\label{2.9}
	\int_{\mathcal{X}_N^2} V(t) d\mu_t\le \a N^2 \mathcal{D}(\sqrt{f_t};\nu_t)
+ C_{\a,\k} K H(\mu_t|\nu_t) + N^{d-1+\k},
	\end{equation}
and also
\begin{equation}
	\label{2.10}
	\int_{\mathcal{X}_N^2} [V_1(t)+V_2(t)] d\mu_t\le \a N^2 \mathcal{D}(\sqrt{f_t};\nu_t)
+ C_{\a,\k} K^2 H(\mu_t|\nu_t) + N^{d-1+\k},
	\end{equation}
the  term
$N^{d-1+\k}$ is replaced by $N^{\frac12+\k}$ when $d=1$.

\end{theorem}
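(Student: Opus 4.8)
The plan is to bound the expectations $\int V(t)\,d\mu_t$ and $\int [V_1(t)+V_2(t)]\,d\mu_t$ against $\mu_t$ by first passing from integration against $\mu_t$ to integration against $\nu_t$. Writing $f_t = d\mu_t/d\nu_t$ and using the elementary entropy inequality (the Yau-type inequality: for $\gamma>0$, $\int g\,d\mu \le \gamma^{-1} H(\mu|\nu) + \gamma^{-1}\log\int e^{\gamma g}\,d\nu$), the task reduces to estimating exponential moments of $V(t)$ and $V_1(t)+V_2(t)$ under the Bernoulli product measure $\nu_t$. Here $\gamma$ should be chosen proportional to $1/K$ for $V$ (respectively $1/K^2$ for $V_1+V_2$), so that the entropy contribution comes out as $C_{\a,\k} K H(\mu_t|\nu_t)$ (resp.\ $C_{\a,\k}K^2 H(\mu_t|\nu_t)$). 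The leftover term $N^2 \mathcal{D}(\sqrt{f_t};\nu_t)$ must be generated separately: after some terms in $V$ are moved to $\nu_t$-integration, the genuinely fluctuating pieces that are gradients of local functions are absorbed into the Dirichlet form by an integration-by-parts estimate of the type $\int (\text{discrete gradient of local function})\,f_t\,d\nu_t \le \a N^2\mathcal{D}(\sqrt{f_t};\nu_t) + (\text{error})$, which is the standard move in the relative-entropy method.

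The key structural inputs are the bounds from Subsection \ref{subsec:2.2}. Lemma \ref{lem:3.1} with $c=c_2<1$ and Lemma \ref{lem:3.2} together give $u_0 e^{-Kt}\le u_i^N(t,x)\le c_2$, hence $\chi(u_i^N(t,x))^{-1} \le C e^{Kt} \le C e^{KT}$; under (A4)$_\de$ this is at most $N^{C\de}$, a small power of $N$ that will be controlled by choosing $\de$ small. This is exactly why the variables $\om_{i,x,t} = \bar\si_{i,x}(t)/\chi(u_i(t,x))$ are bounded by a small power of $N$. For the $V(t)$ estimate one uses that $u_1 u_2$ multiplies $\om_{1,x}\om_{2,x}$, and $K u_1(t,x)u_2(t,x)$ together with the prefactor $|u_1+u_2-1|\le 1$ is the relevant size; since $V(t)$ is a sum over single sites of products of the two independent (under $\nu_t$) centered variables $\bar\si_{1,x},\bar\si_{2,x}$, one computes the log-moment-generating function site by site, getting a bound like $\sum_x (\gamma K u_1 u_2/\chi_1\chi_2)^2 \chi_1\chi_2 \lesssim \gamma^2 K^2 N^d \cdot (\text{small power of }N)$; choosing $\gamma\sim 1/K$ gives a remainder of order $N^d\cdot(\text{small power})$, which must be pushed below $N^{d-1+\k}$ — this forces a further gain, likely obtained because $u_1u_2\le \underbar u(t)^{-1}\cdot(\text{something})$ is not enough and one instead exploits that on the macroscopic support of the discretized solution the product $u_1^N u_2^N$ is itself small, or by splitting sites into those near and far from the interface. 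For $V_1(t)+V_2(t)$ the gradient factor $(u_i(t,y)-u_i(t,x))^2 = N^{-2}|\nabla^N u_i|^2 \le N^{-2}K^2(C_0+C\sqrt t)^2$ from Proposition \ref{prop:2.2} supplies a factor $K^2/N^2$; combined with the $N^2$ from $V_i$ and the sum over $O(N^d)$ edges this is $O(K^2 N^{d-?})$, and one again trades against the Dirichlet form and the entropy with $\gamma \sim 1/K^2$.

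The main obstacle will be extracting the gain from $N^d$ down to $N^{d-1+\k}$ (and to $N^{1/2+\k}$ when $d=1$). A crude exponential-moment bound only gives $O(N^d)$ times a small power of $N$, not a power strictly below $N^d$. The extra factor must come from genuine cancellation: either from a discrete Poincaré/spectral-gap argument that converts the single-site fluctuation sum into a Dirichlet-form term with a favorable volume factor, or from an integration-by-parts that expresses the relevant part of $V$ as a discrete divergence — note $V_i$ already has the form $\sum_{|x-y|=1}(\cdots)\om_{i,x}\om_{i,y}$, a nearest-neighbor bilinear expression, which is precisely what can be written as a (time-integrated) gradient and absorbed. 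The one-dimensional improvement to $N^{1/2+\k}$ presumably reflects that in $d=1$ the interface is a finite set of points, so the number of ``bad'' sites where $u_1^N u_2^N$ is not exponentially small is $O(N^{1/2})$ rather than $O(N^{d-1})=O(1)$ — wait, one must instead read it as: the error is $N^{d-1+\k}$ for $d\ge 2$ and the sharper $N^{1/2+\k}$ for $d=1$, so the counting of interface-adjacent sites and the width of the transition layer (governed by $|\nabla^N u_i|\lesssim K$, giving layer width $\sim N/K$ and hence $\sim N^{d-1}\cdot N/K \cdot$corrections) is the delicate bookkeeping. I would organize Section \ref{sec:3} around: (a) the replacement of $\mu_t$ by $\nu_t$ via the entropy inequality; (b) a partition of $\T_N^d$ into the interfacial layer and its complement, using Proposition \ref{prop:2.2} to control the layer's width; (c) on the complement, $u_1^N u_2^N$ is exponentially small so $V$ contributes negligibly; (d) on the layer, crude bounds plus the small-power-of-$N$ control of $1/\chi$ from Lemmas \ref{lem:3.1}–\ref{lem:3.2}; (e) absorption of the bilinear nearest-neighbor part into $\a N^2\mathcal{D}(\sqrt{f_t};\nu_t)$ by summation by parts.
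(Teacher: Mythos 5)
There is a genuine gap: you never identify the mechanism that actually produces the error $N^{d-1+\k}$ instead of $N^{d+o(1)}$, and the mechanism you tentatively settle on (a partition of $\T_N^d$ into an interfacial layer and its complement, with $u_1^Nu_2^N$ exponentially small off the layer) is not available and is not what the paper does. At the level of the discretized equation \eqref{eq:HD-discre} with finite $K$ there is no pointwise smallness of $u_1^Nu_2^N$ away from an ``interface'' --- the only control of this type is the integrated bound $\int u_1^Nu_2^N\lesssim 1/K$ proved much later in Section \ref{sec:5} --- and in any case the dangerous object in $V(t)$ is the fluctuation field $\om_{1,x}\om_{2,x}$, whose coefficient $K(u_1+u_2-1)u_1u_2/(\chi(u_1)\chi(u_2))=K(u_1+u_2-1)/((1-u_1)(1-u_2))$ is of order $K$ uniformly in $x$, so the smallness of $u_1u_2$ cancels against the $1/\chi$ factors and buys you nothing. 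Your own site-by-site exponential moment computation, as you note, only yields $O(KN^d)$ after optimizing $\ga\sim 1/K$, which is far too large.

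What the paper does instead is a one-block (local averaging) argument. It replaces $V=K\sum_x\tilde\om_{1,x}\om_{2,x}$ by $V^\ell$, the same sum with $\tilde\om_1$ and $\om_2$ replaced by their sample averages over boxes of side $\ell$. The replacement cost $V-V^\ell$ is written, via the flow lemma (Lemma \ref{lem:flow}), as $K\sum_{j,x}h_x^{\ell,j}(\om_{2,x+e_j}-\om_{2,x})$, i.e.\ a sum of discrete gradients with coefficients $h_x^{\ell,j}$ that are $\si_2^{x,x+e_j}$-invariant; these are absorbed into $\a N^2\mathcal{D}(\sqrt{f_t};\nu_t)$ by the integration-by-parts Lemma \ref{lem:IP-0} (here your instinct about summation by parts is correct), with the quadratic remainder $\sum_x\int(h_x^{\ell,j})^2f\,d\nu$ controlled by the entropy inequality plus the concentration inequality, using that $\mathrm{Var}(h_x^{\ell,j})\le C_dg_d(\ell)$. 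The term $\int V^\ell f\,d\nu$ is then handled by entropy plus concentration again, and the gain comes precisely from the fact that each block average has variance $\ell^{-d}$, so the product of the two averages is of size $\ell^{-d}$; taking $\ga\sim\ell^d$ yields $CKH(\mu_t|\nu_t)+CKN^d\ell^{-d}$, and the choice $\ell=N^{1/d-\k'}$ gives $KN^d/\ell^d\le N^{d-1+\k}$. The $d=1$ exponent $N^{1/2+\k}$ has nothing to do with counting interface points: it is forced by $g_1(\ell)=\ell$ in the flow lemma, which caps $\ell$ at $N^{1/2-\k}$. Your treatment of $V_1+V_2$ (using $N^2(u_1(y)-u_1(x))^2\le CK^2$ from Proposition \ref{prop:2.2} to reduce to the same scheme with $K$ replaced by $K^2$) does match the paper, but without the block-averaging step the proof of either inequality does not close.
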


\medskip
By using Proposition \ref{prop:RE}, \eqref{eq:1.2} and the above Theorem,
we obtain
$$
\frac{d}{dt} H(\mu_t|\nu_t) \le C K^2 H(\mu_t|\nu_t) +  O(N^{d-\de_1}),
$$
with $0<\de_1<1$.  We have chosen $\a\in (0,1)$ so that the terms of
positive Dirichlet forms are absorbed by the negative Dirichlet form in
\eqref{eq:dH}.  Thus, Gronwall's inequality shows
$$
H(\mu_t|\nu_t) \le \left( H(\mu_0|\nu_0)+ t O(N^{d-\de_1}) \right) e^{CK^2t}.
$$
Noting $e^{CK^2t} \le N^{Ct\de^2}$ from $1\le K=K(N) \le \de (\log N)^{1/2}$
and $H(\mu_0|\nu_0) = O(N^{d-\de_0})$ by the
assumption, this concludes the proof of Theorem \ref{thm:EstHent}, if
$\de=\de_T>0$ is small enough such that $CT\de^2 < \de_0\wedge \de_1$.
\qed


\section{Proof of Theorem \ref{thm:V3}}
\label{sec:3}

We split the proof in two subsections.


\subsection{Proof of \eqref{2.9}}
\label{sec:4.1}

We omit the dependence on $t$ and define
$$
V := K \sum_{x\in\T_N^d} \tilde\om_{1,x} \om_{2,x},
$$
where  $\tilde\om_{1,x} = \big(u_1(x)+u_2(x)-1\big)
u_1(x)u_2(x) \om_{1,x}$.

The first step is to replace $V$ by its local sample average $V^\ell$
defined by
\begin{align}  \label{eq:4.VL}
V^\ell := K \sum_{x\in\T_N^d} \overleftarrow{(\tilde\om_1)}_{x,\ell}
\overrightarrow{(\om_2)}_{x,\ell},
\end{align}
where
$$
\overrightarrow{g}_{x,\ell} := \frac1{|\La_\ell|} \sum_{y\in\La_\ell}g_{x+y},
\qquad
\overleftarrow{g}_{x,\ell} := \frac1{|\La_\ell|} \sum_{y\in\La_\ell}g_{x-y},
$$
for a function $g= \{g_x(\si_1,\si_2)\}$ and $\La_\ell = [0,\ell-1]^d \cap \Z^d$.

\begin{prop}  \label{prop:2.9}
We assume the conditions of Theorem \ref{thm:EstHent}, in particular,
we take $\de>0$ sufficiently small.  Let $\nu=\nu_{u_1(\cdot),u_2(\cdot)}$,
$d\mu=fd\nu$ (recall we omit $t$) and we choose $\ell = N^{\frac1d-\k'}$
with $\k' (=\k/d)>0$ when $d\ge 2$ and $\ell = N^{\frac12-\k}$ when $d=1$,
with small $\k>0$.  Then the cost of the replacement is estimated as
\begin{align}  \label{eq:2.11}
\int(V-V^\ell)f d\nu \le \a N^2 \mathcal{D}(\sqrt{f};\nu)
+ C_{\a,\k} \left( H(\mu|\nu) + N^{d-1+\k}\right),
\end{align}
for every $\a, \k>0$ with some $C_{\a,\k}>0$ when $d\ge 2$ and the last
$N^{d-1+\k}$ is replaced by $N^{\frac12+\k}$ when $d=1$.
\end{prop}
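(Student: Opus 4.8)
The plan is to write $V-V^\ell$ as an average of nearest-neighbour discrete gradients and to absorb each gradient either into the Dirichlet form $\mathcal D(\sqrt f;\nu)$, through a Kawasaki change of variables, or into an error term that is then controlled by the entropy inequality.

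\textbf{Step 1 (reduction to bond gradients).} Inserting $\overleftarrow{(\tilde\om_1)}_{x,\ell}\,\om_{2,x}$ one splits $V-V^\ell=A+B$ with
$$
A=K\sum_x\big(\tilde\om_{1,x}-\overleftarrow{(\tilde\om_1)}_{x,\ell}\big)\om_{2,x},\qquad
B=K\sum_x\overleftarrow{(\tilde\om_1)}_{x,\ell}\big(\om_{2,x}-\overrightarrow{(\om_2)}_{x,\ell}\big).
$$
I focus on $A$; $B$ is entirely analogous. By the periodicity of $\T_N^d$ and a relabelling of the summation variables,
$$
A=\frac{K}{|\La_\ell|}\sum_{u\in\La_\ell}\sum_{z\in\T_N^d}\tilde\om_{1,z}\,\big(\om_{2,z}-\om_{2,z+u}\big),
$$
and I telescope $\om_{2,z}-\om_{2,z+u}$ along a canonical nearest-neighbour path of length $\le d\ell$ into a signed sum of bond gradients $\om_{2,w}-\om_{2,w+e_j}$ with $w$ within distance $\ell$ of $z$. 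In every resulting term the gradient acts only on $\si_2$, whereas the coefficient $\tilde\om_{1,z}$ depends on $\si_1$ only, and indeed only on the single spin $\si_{1,z}$ — this is what makes the argument go through.

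\textbf{Step 2 (change of variables and the Dirichlet form).} For a fixed bond $b=\{w,w+e_j\}$ I perform the exchange $\si_2\mapsto\si_2^{b}$ in $\int\tilde\om_{1,z}\,(\om_{2,w}-\om_{2,w+e_j})\,f\,d\nu$. Since $\tilde\om_{1,z}$ is invariant under this exchange, the usual symmetrization identity rewrites the integral as
$$
\tfrac12\int\tilde\om_{1,z}\,j_b\,\big(\sqrt f-\sqrt f\circ\Theta_b\big)\big(\sqrt f+\sqrt f\circ\Theta_b\big)\,d\nu\;+\;\tfrac12\,\mathrm{err}_b,
$$
where $\Theta_b$ is the exchange, $j_b=\om_{2,w}-\om_{2,w+e_j}$, and $\mathrm{err}_b$ collects the contributions of the non-invariance of $\nu$ under $\Theta_b$ and of the spatial variation of $\chi(u_2(\cdot))$, each $O(Ke^{CKt}/N)$ per bond by Proposition \ref{prop:2.2}. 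Young's inequality with a parameter $\lambda>0$ bounds the displayed term by $\tfrac1{2\lambda}\int(\sqrt f-\sqrt f\circ\Theta_b)^2\,d\nu$, a genuine Dirichlet-form bond term, plus $\tfrac\lambda2\int\tilde\om_{1,z}^2j_b^2(\sqrt f+\sqrt f\circ\Theta_b)^2\,d\nu\le C\lambda e^{CKt}$, using $|\tilde\om_{1,z}|,|j_b|\le Ce^{CKt}$ (Lemmas \ref{lem:3.1}, \ref{lem:3.2}). Summing over the $O(N^d|\La_\ell|\ell)$ bond terms, using that a fixed bond has multiplicity $O(\ell^{d+1})$, and choosing $\lambda$ of order $K\ell/N^2$: the Dirichlet contributions add up to $\le\a N^2\mathcal D(\sqrt f;\nu)$, while the $C\lambda e^{CKt}$ contributions add up to $\le N^{d-1+\k}$. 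Here $e^{CKt}\le N^{\k}$ for $N$ large thanks to $K\le\de(\log N)^{1/2}$ — the smallness of $\de$ being used at this point — and the choice $|\La_\ell|=\ell^d=N^{1-\k}$ (for $d=1$, $\ell=N^{1/2-\k}$, which produces the bound $N^{\frac12+\k}$) is precisely what makes all the exponents close.

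\textbf{Step 3 (the error terms via the entropy inequality).} The remaining, and main, difficulty is the sum of the $\mathrm{err}_b$: bounding each term by its $L^\infty$ norm and summing is slightly too large. Instead, after one further change of variables each error sum equals $\int\Phi\,d\mu$ with $\Phi=\sum_{b,z,u}\Phi_{b,z,u}$, where each $\Phi_{b,z,u}$ is a bounded function of $\si_{1,z}$ and of $\si_2$ at the two endpoints of $b$ with $\int\Phi_{b,z,u}\,d\nu=0$ — because it has $\tilde\om_{1,z}$ as a factor, which has zero $\nu$-mean and is $\nu$-independent of everything else (the rest living on the other species). Applying the entropy inequality $\int\Phi\,d\mu\le\gamma^{-1}H(\mu|\nu)+\gamma^{-1}\log\int e^{\gamma\Phi}\,d\nu$ with a small constant $\gamma$, and estimating $\log\int e^{\gamma\Phi}\,d\nu$ by splitting the $O(\ell^{d+1})$-bounded-degree dependency graph of $\{\Phi_{b,z,u}\}$ into colour classes and using Hoeffding's bound on each independent class, one obtains $\log\int e^{\gamma\Phi}\,d\nu\le C\gamma^2\ell^{2d+2}N^d\max_{b,z,u}\|\Phi_{b,z,u}\|_\infty^2$, which by the same exponent count is $\le N^{d-1+\k}$ (resp. $N^{\frac12+\k}$). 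This produces the $C_{\a,\k}H(\mu|\nu)+N^{d-1+\k}$ on the right of \eqref{eq:2.11}; adding up the contributions of Steps 2 and 3 for $A$ and $B$ proves the proposition. The hard part is exactly this last step — recovering the square-root cancellation in the error sums that the naive bound misses — and it is what forces the entropy term onto the right-hand side.
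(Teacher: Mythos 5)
Your overall architecture is a genuine alternative to the paper's: the paper does not telescope along canonical paths but invokes a \emph{flow lemma} (Jara--Menezes) to write $V-V^\ell=K\sum_{j,x}h_x^{\ell,j}(\om_{2,x+e_j}-\om_{2,x})$ with $h_x^{\ell,j}=\sum_y\tilde\om_{1,x+y+e_j}\Phi^\ell(y,y+e_j)$ and $\sum_y\Phi^\ell(y,y+e_j)^2\le C_dg_d(\ell)$ ($g_d(\ell)=\ell,\log\ell,1$ for $d=1,2,\ge3$), and then controls $\sum_x\int (h_x^{\ell,j})^2f\,d\nu$ by the entropy inequality plus a concentration inequality exploiting this small variance. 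Your term-by-term Young's inequality for the main part forgoes that square-root cancellation entirely, but your exponent count does close there: $K^2\ell^2N^{d-2}\le N^{d-1}$ for $d\ge 2$ and $\le N^{-2\k}$ for $d=1$. Your Step 3 correctly identifies that the integration-by-parts errors (of size $K/N$ per bond, hence $K^2\ell N^{d-1}$ in total by the naive bound) are where square-root cancellation is genuinely needed, and for the $A$-half your entropy/Hoeffding argument with chromatic number $O(\ell^{d+1})$ gives $\ell^{2d+2}N^dM^2\sim\ell^2N^{d-2}\le N^{d-1}$, which is fine.

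The gap is the sentence ``$B$ is entirely analogous.'' It is not, and for the very reason you yourself flag as essential: in $B$ the coefficient multiplying each bond gradient is the block average $\overleftarrow{(\tilde\om_1)}_{x,\ell}$, supported on $\ell^d$ sites of $\si_1$, not on the single spin $\si_{1,z}$. Two consequences. First, the dependency graph of $\{\Phi_{b,x,u}\}$ now has cliques (all triples with $|x-x'|_\infty<\ell/2$ pairwise overlap in their $\si_1$-support) of size $\Theta(\ell^{2d+1})$, so the number of colour classes is $\Theta(\ell^{2d+1})$, not $O(\ell^{d+1})$; your own formula $C\gamma^2 D\,T\,\max\|\Phi\|_\infty^2$ then yields $\ell^{d+2}N^{d-2}=N^{\,d-1+2/d-O(\k)}$, which exceeds the claimed $N^{d-1+\k}$ for every $d\ge2$. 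Second, the obvious repair --- replacing the sup norm by the variance $\sim\ell^{-d}\|\Phi\|_\infty^2$ of the block average --- does not go through the H\"older/colour-class scheme either, because the exponential moment is taken at parameter $D\gamma$ and $D\gamma\max\|\Phi\|_\infty\to\infty$, forcing $\gamma\to0$ and hence a coefficient of $H(\mu|\nu)$ of order $N^{1/d}$, which destroys the Gronwall step in the proof of Theorem \ref{thm:EstHent} (only an $O(K^2)$ coefficient is tolerable there). The clean fix is to avoid the $A+B$ split: write $V-V^\ell=K\sum_x\tilde\om_{1,x}\{\om_{2,x}-(\om_2*q_\ell)(x)\}$ with $q_\ell=p_\ell*p_\ell$ and telescope that in one step, so that every bond coefficient remains a weighted sum of \emph{single-site} variables $\tilde\om_{1,z}$ --- which is exactly what the paper's $h_x^{\ell,j}$ accomplishes, with the flow lemma additionally optimizing the weights so that their squared sum is only $g_d(\ell)$ rather than the $O(\ell)$ produced by canonical paths.
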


The first tool to show this proposition is the flow lemma for the telescopic sum.  We call
$\Phi = \{\Phi(x,y)\}_{b=\{x,y\}\in G^*}$  a flow on a finite set $G$
connecting two probability measures $p$ and $q$ on $G$ if $\Phi(x,y)
= -\Phi(y,x)$ hold for all $\{x,y\}\in G^*$ and $\sum_{z\in G}\Phi(x,z) = p(x)-q(x)$
hold for all $x\in G$, where $G^*$ is the set of all bonds in $G$.
The following lemma is found in Appendix G of \cite{JM}, see also \cite{F18}, \cite{FT}.

\begin{lem} \label{lem:flow} (Flow lemma) There exists a flow
$\Phi^\ell$ on $\La_{2\ell} := \{0,1,\ldots,2\ell-1\}^d$ connecting
$\de_0$ and $q_\ell := p_\ell* p_\ell$,
$p_\ell (x)= \frac1{|\La_\ell|} 1_{\La_\ell}(x)$, such that
$$
\sum_{x\in \La_{2\ell-1}} \sum_{j=1}^d \Phi^\ell(x,x+e_j)^2
\le C_d g_d(\ell),
$$
where $e_j$ is a unit vector to $j$th positive direction, and
 $g_d(\ell) = \ell$ when $d=1$, $\log\ell$ when $d=2$ and $1$ when
$d\ge 3$.
\end{lem}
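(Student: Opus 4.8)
The plan is to prove the lemma by an explicit construction of $\Phi^\ell$ together with a direct energy estimate, using the effective-resistance heuristic only as a guide. Recall that a flow is precisely a skew-symmetric function whose discrete divergence equals the prescribed difference of measures, so the entire task is to build some $\Phi^\ell$ with $\sum_{z\in\La_{2\ell}}\Phi^\ell(x,z)=\de_0(x)-q_\ell(x)$ for all $x$ and then bound its $\ell^2$-energy; we are free to use any flow with the correct divergence, not necessarily the energy minimizer. Conceptually, by Thomson's principle the minimal such energy equals the $H^{-1}$-norm of $\de_0-q_\ell$ on $\La_{2\ell}$, that is, an effective resistance from the origin out to a mass spread over scale $\ell$, and the three regimes $\ell,\ \log\ell,\ 1$ are exactly the resistance growth of $\Z^d$ in dimensions $1,2,\ge 3$; this is the mechanism we must reproduce quantitatively.

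First I would reduce the target from $q_\ell$ to the uniform measure $p_\ell$ on $\La_\ell$. Suppose $\Phi^{(1)}$ is a flow from $\de_0$ to $p_\ell$ supported in $\La_\ell$. Let $\t_y$ denote the translation of a flow by $y$, and set $\Phi^{(2)}:=\sum_{y\in\La_\ell}p_\ell(y)\,\t_y\Phi^{(1)}$. Since $\mathrm{div}(\t_y\Phi^{(1)})=\de_y-\t_y p_\ell$, one gets $\mathrm{div}\,\Phi^{(2)}=\sum_y p_\ell(y)(\de_y-\t_y p_\ell)=p_\ell-p_\ell*p_\ell=p_\ell-q_\ell$, and all translates stay inside $\La_{2\ell}$. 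Hence $\Phi^\ell:=\Phi^{(1)}+\Phi^{(2)}$ connects $\de_0$ to $q_\ell$, and by Jensen's inequality applied bond by bond, $\mathcal E(\Phi^{(2)})\le\sum_y p_\ell(y)\,\mathcal E(\t_y\Phi^{(1)})=\mathcal E(\Phi^{(1)})$ because translation preserves energy. Thus it suffices to produce a flow from $\de_0$ to $p_\ell$ with energy $\le C_d\,g_d(\ell)$.

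For that flow I would send, for each target $z\in\La_\ell$, the mass $p_\ell(z)=\ell^{-d}$ from the origin to $z$ along a lattice path $\ga_z$ staying in $\La_\ell$, and take $\Phi^{(1)}$ to be the superposition of these unit transports, the current across a bond being the signed total $p_\ell$-mass of the paths crossing it. The paths must be chosen so as to spread out as evenly as possible in the angular variable, forcing the current across any bond at distance $r$ from the origin to be $\lesssim r^{1-d}$: this is dictated by flux conservation once the total current $\le 1$ through scale $r$ is distributed over the $\asymp r^{d-1}$ available bonds. Granting such a construction, the energy is controlled by
\[
\mathcal E(\Phi^{(1)})\ \lesssim\ \sum_{r=1}^{\ell} r^{d-1}\,\big(r^{1-d}\big)^2\ =\ \sum_{r=1}^{\ell} r^{1-d}\ \asymp\ g_d(\ell),
\]
which is $\asymp\ell$ for $d=1$, $\asymp\log\ell$ for $d=2$, and $O(1)$ for $d\ge 3$, matching the claim. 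I expect the genuine difficulty to be precisely this angular-spreading construction on $\Z^d$: the lattice is not rotation invariant, so producing explicit paths $\ga_z$ obeying the uniform per-bond congestion bound $r^{1-d}$ takes care. A convenient way to organise it is dyadically, decomposing $\de_0\to p_\ell$ into $\log_2\ell$ doubling steps $\rho_{j-1}\to\rho_j$ with $\rho_j$ uniform on the box of side $2^j$, building at each scale a spreading flow confined to that box with energy $\le C_d\,(2^j)^{2-d}$, and summing the resulting series (geometric for $d\ge 3$, constant-per-scale for $d=2$, growing for $d=1$) to recover $g_d(\ell)$. Either way the single-scale congestion estimate is the crux; the remaining bookkeeping of divergences, skew-symmetry, and the passage from directed to undirected bonds in $\sum_{x\in\La_{2\ell-1}}\sum_{j=1}^d\Phi^\ell(x,x+e_j)^2$ is routine.
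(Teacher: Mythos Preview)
The paper does not give its own proof of this lemma; it simply cites Appendix~G of \cite{JM} (and \cite{F18}, \cite{FT}), with the Remark afterward only spelling out the explicit one-dimensional flow $\tilde\Phi(x)=(\ell-x)/\ell$ from $\de_0$ to $p_\ell$ as a sanity check. Your sketch is correct and is essentially the construction carried out in those references: the convolution reduction $\Phi^\ell=\Phi^{(1)}+\sum_{y}p_\ell(y)\t_y\Phi^{(1)}$ together with Jensen's inequality reduces the problem to a flow $\Phi^{(1)}$ from $\de_0$ to $p_\ell$, and the dyadic doubling construction (uniform on boxes of side $2^j$, $j=0,\dots,\log_2\ell$) with per-scale energy $\lesssim (2^j)^{2-d}$ is precisely the device used in \cite{JM} to obtain the bound $g_d(\ell)$.
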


\begin{rem}
{\rm (1)} When $d=1$, the flow $\Phi^\ell$ on $\La_{\ell+1} = \{0,1,\ldots,\ell\}$ connecting
$\de_0$ and $p_\ell(x) = \frac1\ell 1_{\{1,\ldots,\ell\}}(x)$
is given by $\Phi^\ell(x,x+1) = \frac{\ell-x}\ell, 0\le x \le \ell-1$.  Indeed, the condition
on $\Phi^\ell$ is
$$
\Phi^\ell(x,x+1)+\Phi^\ell(x,x-1) = \de_0(x)-p_\ell(x), \quad x \in \La_\ell,
$$
with $\Phi^\ell(\ell,\ell+1) = \Phi^\ell(0,-1)=0$.  Or equivalently, recalling that
$\Phi^\ell(x,x-1)=- \Phi^\ell(x-1,x)$ and setting $\tilde\Phi(x) := \Phi^\ell(x,x+1)$,
the condition is
\begin{align*}
& \nabla\tilde\Phi(x) \left(= \tilde\Phi(x) - \tilde\Phi(x-1) \right) = -\frac1\ell,
\quad 1 \le x \le \ell, \\
& \tilde\Phi(0) = 1, \tilde\Phi(\ell)=0,
\end{align*}
i.e., the gradient of $\tilde\Phi$ is a constant so that $\tilde\Phi$ is an affine function.
This equation is easily solved and we obtain $\tilde\Phi(x)= \frac{\ell-x}\ell$.  \\
{\rm (2)}  In Lemma \ref{lem:flow}, we are concerned with $q_\ell$ instead of $p_\ell$.
When $d=1$,
\begin{align*}
q_\ell(x) & = \sum_{y\in\T_N^d} p_\ell(x-y)p_\ell(y)
= \frac1{\ell^2} \sum_{1\le x-y \le \ell, 1\le y \le \ell} 1 \\
& = \frac1{\ell^2} \sharp \{ y: 1\le y \le \ell, x-\ell\le y \le x-1\}  \\
& = \left\{ \begin{aligned}
& \frac{x-1}{\ell^2} \quad (\text{if } x-\ell\le 1, \text{ i.e. } x \le \ell+1), \\
& \frac{2\ell+1-x}{\ell^2} \quad (\text{if } x-\ell\ge 1, \text{ i.e. } x \ge \ell+1),
\end{aligned} \right.
\end{align*}
i.e., $q_\ell$ is piecewise affine.  Therefore, its integration $\Phi^\ell$ is piecewise
quadratic.
\end{rem}

Note that
\begin{align*}
(g*p_\ell)(x) &= \sum_{y\in\T_N^d} g_{x-y} p_\ell(y) \\
&= \frac1{|\La_\ell|} \sum_{y\in \La_\ell}g_{x-y} = \overleftarrow{g}_{x,\ell},
\end{align*}
and similarly $(g*\hat p_\ell)(x) = \overrightarrow{g}_{x,\ell}$, where $\hat p_\ell(y) := p_\ell(-y)$.
Therefore,
\begin{align*}
V^\ell & = K \sum_{x\in\T_N^d} \overleftarrow{(\tilde\om_1)}_{x,\ell}
\overrightarrow{(\om_2)}_{x,\ell} \\
& = K \sum_{y\in\T_N^d} \tilde\om_{1,y} (\om_2* q_\ell)(y).
\end{align*}
Accordingly, from Lemma \ref{lem:flow} and $\Phi^\ell(y,y-e_j) = - \Phi^\ell(y-e_j,y)$, one can rewrite
\begin{align*}
V-V^\ell
& = K \sum_{x\in\T_N^d} \tilde\om_{1,x} \{\om_{2,x} - (\om_2 * q_\ell)(x)\} \\
& = K \sum_{x\in\T_N^d} \tilde\om_{1,x} \left\{\om_{2,x} - \sum_{y\in\T_N^d}\om_{2,x-y} q_\ell(y)\right\} \\
& = K \sum_{x\in\T_N^d} \tilde\om_{1,x} \sum_{y\in\T_N^d}\om_{2,x-y} \left\{\de_0(y)- q_\ell(y)\right\} \\
& = K \sum_{x\in\T_N^d} \tilde\om_{1,x} \sum_{y\in\T_N^d}\om_{2,x-y} \sum_{\pm}\sum_{j=1}^d \Phi^\ell(y,y\pm e_j) \\
& = K \sum_{j=1}^d \sum_{x\in\T_N^d} \tilde\om_{1,x} \sum_{y\in\T_N^d} (\om_{2,x-y} -\om_{2,x-y-e_j}) \Phi^\ell(y,y+ e_j) \\
& = K \sum_{j=1}^d \sum_{x\in\T_N^d}  \left(\sum_{y\in\T_N^d} \tilde\om_{1,x+y+e_j} \Phi^\ell(y,y+ e_j) \right)
\{\om_{2,x+e_j} - \om_{2,x} \}.
\end{align*}
Thus, we have shown
\begin{equation}\label{eq:2.13}
V-V^\ell = K \sum_{j=1}^d \sum_{x\in \T_N^d} h_x^{\ell,j} (\om_{2,x+e_j} -\om_{2,x}),
\end{equation}
where
\begin{equation}\label{eq:2.14}
h_x^{\ell,j} \equiv h_x^{\ell,j}(\si_1)
= \sum_{y\in \La_{2\ell-1}} \tilde\om_{1,x+y+e_j}\Phi^\ell(y,y+e_j).
\end{equation}
Note that $h_x^{\ell,j}$ satisfies $h_x^{\ell,j}(\si_1^{x,x+e_j}) = h_x^{\ell,j}(\si_1)$.
This property becomes useful to study the first and second terms of
\eqref{eq:1.2}.  For the third term $V$ of \eqref{eq:1.2}, which we concern now, we will use
the property $h_x^{\ell,j}(\si_2^{x,x+e_j}) = h_x^{\ell,j}(\si_2)$, which is obvious
since $h_x^{\ell,j}$ is a function of $\si_1$, see Lemma \ref{lem:2.13} below.

Another lemma we use is the integration by parts formula under the
Bernoulli measure $\nu_{u_1(\cdot),u_2(\cdot)}$ on $\mathcal{X}_N^2$ with
a spatially dependent mean.  We will apply this formula for the function $h=h_x^{\ell,j}$.
The formula is stated for general $h$ with an error caused by
the non-constant property of $u_2(\cdot)$.

\begin{lem} \label{lem:IP-0} (Integration by parts)
Let $\nu=\nu_{u_1(\cdot),u_2(\cdot)}$ and assume
$e^{-c_1K} \le u_2(x), u_2(y)\le c_2$ holds for  $x, y\in \T_N^d: |x-y|=1$
with some $c_1>0, 0<c_2<1$.
Then, for $h=h(\si_1,\si_2)$ and a probability density $f=f(\si_1,\si_2)$
with respect to $\nu$, we have
$$
\int h(\si_{2,y}-\si_{2,x}) f d\nu=\int h(\si_1,\si_2^{x,y}) \si_{2,x}\big( f(\si_1,\si_2^{x,y})-f(\si_1,\si_2)\big) d\nu +R_1,
$$
and the error term $R_1=R_{1,x,y}$ is bounded as
$$
|R_1| \le C e^{2 c_1K} |\nabla_{x,y}^1 u_2| \int|h(\si_1,\si_2)| f d\nu + \|h-h(\si_1,\si_2^{x,y})\|_\infty,
$$
with some $C=C_{c_2}>0$, where $\nabla_{x,y}^1 u =u(x)-u(y)$.
\end{lem}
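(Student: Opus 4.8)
The plan is to prove this as a microscopic ``integration by parts'': the quantity $\sigma_{2,y}-\sigma_{2,x}$ plays the role of a discrete gradient acting on $h$, and the assertion says one may move this gradient onto $f$, at the price of a boundary/error term $R_1$ whose only source is that the product Bernoulli measure $\nu=\nu_{u_1(\cdot),u_2(\cdot)}$ is \emph{not} invariant under the exchange $\sigma_2\mapsto\sigma_2^{x,y}$ in the second component when $u_2(x)\neq u_2(y)$. I would carry this out by an explicit change of variables, carefully tracking the Radon--Nikodym factor. Throughout set
\[
\rho:=\frac{u_2(x)\,(1-u_2(y))}{(1-u_2(x))\,u_2(y)},
\]
which is close to $1$ exactly when $u_2(x)\approx u_2(y)$. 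First I would decompose $\sigma_{2,y}-\sigma_{2,x}=\sigma_{2,y}(1-\sigma_{2,x})-\sigma_{2,x}(1-\sigma_{2,y})$, so that each of the two resulting integrals is concentrated on configurations with $\{\sigma_{2,x},\sigma_{2,y}\}=\{0,1\}$.

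In $\int h\,\sigma_{2,y}(1-\sigma_{2,x})f\,d\nu$ I would substitute $\sigma_2=\eta_2^{x,y}$. This produces the factor $\nu(\sigma_1,\eta_2^{x,y})/\nu(\sigma_1,\eta_2)$, which on the support of the new integrand (there $\eta_{2,x}=1$, $\eta_{2,y}=0$) equals $\rho^{-1}$; renaming $\eta$ back to $\sigma$, the integral becomes $\rho^{-1}\int h(\sigma_1,\sigma_2^{x,y})\,\sigma_{2,x}(1-\sigma_{2,y})\,f(\sigma_1,\sigma_2^{x,y})\,d\nu$. Recombining with $-\int h\,\sigma_{2,x}(1-\sigma_{2,y})f\,d\nu$ then gives
\begin{align*}
\int h(\sigma_{2,y}-\sigma_{2,x})f\,d\nu
&=\int \sigma_{2,x}(1-\sigma_{2,y})\Big[\rho^{-1}h(\sigma_1,\sigma_2^{x,y})f(\sigma_1,\sigma_2^{x,y})\\
&\qquad\qquad\qquad -h(\sigma_1,\sigma_2)f(\sigma_1,\sigma_2)\Big]\,d\nu .
\end{align*}

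Next I would extract the main term. Replacing $h(\sigma_1,\sigma_2)$ by $h(\sigma_1,\sigma_2^{x,y})$ in the last bracketed term costs at most $\|h-h(\sigma_1,\sigma_2^{x,y})\|_\infty$, since $\int\sigma_{2,x}(1-\sigma_{2,y})f\,d\nu\le 1$. Writing then
\begin{align*}
&\rho^{-1}h(\sigma_1,\sigma_2^{x,y})f(\sigma_1,\sigma_2^{x,y})-h(\sigma_1,\sigma_2^{x,y})f(\sigma_1,\sigma_2)\\
&\quad= h(\sigma_1,\sigma_2^{x,y})\big(f(\sigma_1,\sigma_2^{x,y})-f(\sigma_1,\sigma_2)\big)+(\rho^{-1}-1)\,h(\sigma_1,\sigma_2^{x,y})f(\sigma_1,\sigma_2^{x,y}),
\end{align*}
and multiplying by $\sigma_{2,x}(1-\sigma_{2,y})$, the contribution of the first summand is $\sigma_{2,x}\,h(\sigma_1,\sigma_2^{x,y})\big(f(\sigma_1,\sigma_2^{x,y})-f(\sigma_1,\sigma_2)\big)$, i.e.\ exactly the claimed right-hand side: the discrepancy between $\sigma_{2,x}(1-\sigma_{2,y})$ and $\sigma_{2,x}$, carried by $\sigma_{2,x}\sigma_{2,y}$, vanishes since on $\{\sigma_{2,x}=\sigma_{2,y}=1\}$ one has $\sigma_2^{x,y}=\sigma_2$, which annihilates the difference $f(\sigma_1,\sigma_2^{x,y})-f(\sigma_1,\sigma_2)$. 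The second summand, together with the replacement error above, constitutes $R_1$.

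Finally I would estimate $R_1$. From $\rho^{-1}-1=\frac{u_2(y)-u_2(x)}{u_2(x)(1-u_2(y))}$ and the hypothesis $e^{-c_1K}\le u_2\le c_2$ one gets $|\rho^{-1}-1|\le C_{c_2}\,e^{c_1K}\,|\nabla_{x,y}^1 u_2|$; moreover, undoing the change of variables once more, $\int\sigma_{2,x}(1-\sigma_{2,y})|h(\sigma_1,\sigma_2^{x,y})|f(\sigma_1,\sigma_2^{x,y})\,d\nu=\rho\int\sigma_{2,y}(1-\sigma_{2,x})|h|f\,d\nu\le C_{c_2}\,e^{c_1K}\int|h|f\,d\nu$, using $\rho\le C_{c_2}\,e^{c_1K}$. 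The product of these two bounds produces the term $C\,e^{2c_1K}\,|\nabla_{x,y}^1 u_2|\int|h|f\,d\nu$, and adding the $\|h-h(\sigma_1,\sigma_2^{x,y})\|_\infty$ contribution from the replacement yields the asserted bound on $R_1$. The only genuinely delicate point is the bookkeeping: keeping straight which substitution produces $\rho$ and which $\rho^{-1}$, and recognizing that a \emph{second} change of variables (costing another factor $\rho\sim e^{c_1K}$) is precisely what converts $\int|h(\sigma_1,\sigma_2^{x,y})|f(\sigma_1,\sigma_2^{x,y})\,d\nu$ back into $\int|h|f\,d\nu$, which is why the exponent is $2c_1K$ rather than $c_1K$; no probabilistic or PDE input is needed beyond the product structure of $\nu$. (In the application one takes $h=h_x^{\ell,j}$, which depends on $\sigma_1$ alone, so there $\|h-h(\sigma_1,\sigma_2^{x,y})\|_\infty=0$.)
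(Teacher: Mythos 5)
Your proof is correct and follows essentially the same route as the paper's: a change of variables $\si_2\mapsto\si_2^{x,y}$ under the inhomogeneous Bernoulli measure, with the Radon--Nikodym factor deviating from $1$ by $O(e^{c_1K}|\nabla^1_{x,y}u_2|)$ and a second change of variables accounting for the exponent $2c_1K$. Your preliminary decomposition $\si_{2,y}-\si_{2,x}=\si_{2,y}(1-\si_{2,x})-\si_{2,x}(1-\si_{2,y})$, which makes the Radon--Nikodym factor the constant $\rho^{-1}$ on the relevant event (and requires your correct observation that $\si_{2,x}\si_{2,y}\bigl(f(\si_1,\si_2^{x,y})-f(\si_1,\si_2)\bigr)=0$), is only a cosmetic variant of the paper's bookkeeping via $1+r_{x,y}(\si)$.
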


\begin{proof}
First we write
$$
\int h(\si_{2,y}-\si_{2,x}) f d\nu=\sum_{\si_1,\si_2} h(\si_1,\si_2) (\si_{2,y}-\si_{2,x}) f(\si_1,\si_2)\nu(\si_1,\si_2).
$$
Then, by a change of variables $\zeta:=\si_2^{x,y}$ and writing $\zeta$ by $\si_2$ again, we have
\begin{align*}
\sum_{\si_2} h(\si_1,\si_2) \si_{2,y} f(\si_1,\si_2)\nu_2(\si_2)
 = \sum_{\si_2} h(\si_1,\si_2^{x,y}) \si_{2,x} f(\si_1,\si_2^{x,y})\nu_2(\si_2^{x,y}),
\end{align*}
where $\nu_2=\nu_{u_2(\cdot)}$ is a probability measure on $\mathcal{X}_N$,
recall $\nu=\nu_{u_1(\cdot)}\otimes \nu_{u_2(\cdot)}$.
To replace the last $\nu_2(\si_2^{x,y})$ by $\nu_2(\si_2)$, we observe
\begin{align*}
\frac{\nu_2(\si^{x,y})}{\nu_2(\si)} & =
1_{\{\si_x=1,\si_y=0\}} \frac{(1-u_2(x))u_2(y)}{u_2(x)(1-u_2(y))}
+ 1_{\{\si_x=0,\si_y=1\}} \frac{u_2(x)(1-u_2(y))}{(1-u_2(x))u_2(y)}
+ 1_{\{\si_x=\si_y\}} \\
& = 1+r_{x,y}(\si),
\end{align*}
with
\begin{align*}
r_{x,y}(\si)= 1_{\{\si_x=1,\si_y=0\}} \frac{u_2(y)-u_2(x)}{u_2(x)(1-u_2(y))}
+ 1_{\{\si_x=0,\si_y=1\}} \frac{u_2(x)-u_2(y)}{(1-u_2(x))u_2(y)}.
\end{align*}
By the condition on $u_2$, this error is bounded as
$$
|r_{xy}(\si)| \le C_0 e^{c_1K} |\nabla_{x,y}^1 u_2|, \quad C_0 = C_{c_2}>0.
$$
These computations are summarized as
\begin{align*}
\int h&(\si_{2,y}-\si_{2,x}) f d\nu=\int h(\si_1,\si_2^{x,y}) \si_{2,x} f(\si_1,\si_2^{x,y}) (1+ r_{xy}(\si_2)) d\nu
- \int h \si_{2,x} fd\nu \\
=&\int h(\si_1,\si_2^{x,y}) \si_{2,x}\big( f(\si_1,\si_2^{x,y})-f(\si_1,\si_2)\big) d\nu  \\
&+ \int(h(\si_1,\si_2^{x,y})-h(\si_1,\si_2))  \si_{2,x} fd\nu
+ \int h(\si_1,\si_2^{x,y}) \si_{2,x} f(\si_1,\si_2^{x,y}) r_{xy}(\si_2) d\nu.
\end{align*}
The second term is bounded by $\|h(\si_1,\si_2^{x,y})-h(\si_1,\si_2)\|_\infty$, since $|\si_{2,x}|\le 1$ and $\int fd\nu=1$.
For the third term denoted by $R_0$, applying the change of variables again,
we have
\begin{align*}
|R_0| & = \left|  \sum_{\si_1,\si_2} h(\si_1,\si_2) \si_{2,y} f(\si_1,\si_2) r_{xy}(\si_2^{x,y}) \nu(\si_1,\si_2^{x,y}) \right| \\
& = \left|  \sum_{\si_1,\si_2} h(\si_1,\si_2) \si_{2,y} f(\si_1,\si_2) r_{xy}(\si_2^{x,y}) \big(1+ r_{xy}(\si_2)\big)
 \nu(\si_1,\si_2)\right| \\
& \le C_0e^{c_1K}  |\nabla_{x,y}^1 u_2| (1+C_0e^{c_1K}  |\nabla_{x,y}^1 u_2| ) \int|h(\si)| fd\nu  \\
& \le C e^{2c_1K} |\nabla_{x,y}^1 u_2| \int|h(\si)| fd\nu,
\end{align*}
since $|\si_{2,y}|\le 1$ and $ |\nabla_{x,y}^1 u_2|\le 2 c_2$.
This completes the proof.
\end{proof}

We apply Lemma \ref{lem:IP-0} to $V-V^\ell$ given in \eqref{eq:2.13}.  Note that
$h_x^{\ell,j}(\si_1)$ is invariant under the transform $\si_2\mapsto \si_2^{x,y}$.
Since we have $\om_{2,x}= \frac{\si_{2,x}-u_2(x)}{\chi(u_2(x))}$ in \eqref{eq:2.13}
instead of $\si_{2,x}$ in Lemma \ref{lem:IP-0}, we need to estimate the error caused by
the $x$-dependence of $\om_{2,x}$ through $u_2(x)$.

\begin{lem} \label{lem:2.13}
We assume that $\nu=\nu_{u_1(\cdot), u_2(\cdot)}$ satisfies the same condition
as in Lemma \ref{lem:IP-0}.  Then, we have
\begin{equation}  \label{eq:2.15}
\int h_x^{\ell,j}(\om_{2,x+e_j}-\om_{2,x}) f d\nu=\int h_x^{\ell,j} \frac{\si_{2,x}}{\chi(u_2(x))}
\big( f(\si_1,\si_2^{x,x+e_j})-f(\si_1,\si_2)\big) d\nu +R_2,
\end{equation}
and the error term $R_2=R_{2,x,j}$ is bounded as
\begin{equation}  \label{eq:4.5}
|R_2| \le Ce^{3c_1K} |\nabla_{x,x+e_j}^1 u_2| \int|h_x^{\ell,j}(\si_1,\si_2)| f d\nu,
\end{equation}
with some $C=C_{c_2}>0$.
\end{lem}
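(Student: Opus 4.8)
The plan is to reduce Lemma \ref{lem:2.13} to the integration-by-parts identity of Lemma \ref{lem:IP-0} by isolating the $x$-dependence hidden in the normalizations $\chi(u_2(\cdot))$. Writing $y=x+e_j$ and recalling $\om_{2,z}=(\si_{2,z}-u_2(z))/\chi(u_2(z))$, I would first carry out the algebraic decomposition
$$
\om_{2,y}-\om_{2,x}=\frac{\si_{2,y}-\si_{2,x}}{\chi(u_2(x))}
+\si_{2,y}\Big(\frac1{\chi(u_2(y))}-\frac1{\chi(u_2(x))}\Big)
-\Big(\frac{u_2(y)}{\chi(u_2(y))}-\frac{u_2(x)}{\chi(u_2(x))}\Big),
$$
so that the leading term carries the single weight $1/\chi(u_2(x))$ and the two remaining terms are error terms governed by the discrete gradient of $u_2$.

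For the leading term I would apply Lemma \ref{lem:IP-0} with $h=h_x^{\ell,j}/\chi(u_2(x))$, which is admissible since $1/\chi(u_2(x))$ is a deterministic scalar. The crucial simplification is that $h_x^{\ell,j}$ is a function of $\si_1$ alone (it is built from $\tilde\om_{1,\cdot}$ and the flow $\Phi^\ell$), so $h_x^{\ell,j}(\si_1,\si_2^{x,y})=h_x^{\ell,j}(\si_1)$ and the term $\|h-h(\cdot,\si_2^{x,y})\|_\infty$ in the error of Lemma \ref{lem:IP-0} vanishes identically; what is produced is exactly the main term $\int h_x^{\ell,j}\,\frac{\si_{2,x}}{\chi(u_2(x))}\big(f(\si_1,\si_2^{x,y})-f(\si_1,\si_2)\big)d\nu$ of \eqref{eq:2.15}, together with an error bounded by $Ce^{2c_1K}|\nabla^1_{x,y}u_2|\,\chi(u_2(x))^{-1}\int|h_x^{\ell,j}|fd\nu$. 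Inserting the bound $\chi(u_2(x))^{-1}=\big(u_2(x)(1-u_2(x))\big)^{-1}\le e^{c_1K}/(1-c_2)$, which follows from the hypothesis $e^{-c_1K}\le u_2(x)\le c_2$, gives a contribution of order $e^{3c_1K}|\nabla^1_{x,y}u_2|\int|h_x^{\ell,j}|fd\nu$, as required by \eqref{eq:4.5}.

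It then remains to absorb the two correction terms into $R_2$ via elementary Lipschitz estimates. Since $|\chi'(u)|=|1-2u|\le1$ one has $|\chi(u_2(x))-\chi(u_2(y))|\le|\nabla^1_{x,y}u_2|$, and together with $\chi(u_2(x)),\chi(u_2(y))\ge e^{-c_1K}(1-c_2)$ this yields $\big|\chi(u_2(y))^{-1}-\chi(u_2(x))^{-1}\big|\le e^{2c_1K}(1-c_2)^{-2}|\nabla^1_{x,y}u_2|$; for the last term the identity $u/\chi(u)=1/(1-u)$ gives directly $\big|\frac{u_2(y)}{\chi(u_2(y))}-\frac{u_2(x)}{\chi(u_2(x))}\big|\le(1-c_2)^{-2}|\nabla^1_{x,y}u_2|$. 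Multiplying by $h_x^{\ell,j}$, using $|\si_{2,y}|\le1$ and integrating against $fd\nu$ (recall $f$ is a probability density) bounds both terms by $Ce^{2c_1K}|\nabla^1_{x,y}u_2|\int|h_x^{\ell,j}|fd\nu$. Collecting the three pieces and using $e^{2c_1K}\le e^{3c_1K}$ produces \eqref{eq:4.5}. I do not expect a genuine difficulty here: the argument is a direct perturbation of Lemma \ref{lem:IP-0}, and the only point requiring care is the bookkeeping of the factors $e^{c_1K}$ generated by the lower bound $u_2\ge e^{-c_1K}$ — one must verify that the worst accumulation (one extra factor beyond the $e^{2c_1K}$ of Lemma \ref{lem:IP-0} in the main term, and two such factors in the first correction term) never exceeds the claimed $e^{3c_1K}$, which it does not.
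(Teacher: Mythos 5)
Your proposal is correct and follows essentially the same route as the paper: the same three-term decomposition of $\om_{2,y}-\om_{2,x}$ (the paper's $I_{1,1}+I_{1,2}-I_2$), the same application of Lemma \ref{lem:IP-0} exploiting that $h_x^{\ell,j}$ depends only on $\si_1$, and the same bookkeeping of the $e^{c_1K}$ factors from the lower bound on $\chi(u_2)$. The only (minor, harmless) refinement is your observation that $u/\chi(u)=1/(1-u)$ makes the last correction term independent of the lower bound $u_2\ge e^{-c_1K}$.
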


\begin{proof}
By the definition of $\om_x$, denoting $y=x+e_j$, we have
\begin{align*}
\int h_x^{\ell,j}(\om_{2,y}-\om_{2,x}) f d\nu
& =\int h_x^{\ell,j} \left(\frac{\si_{2,y}}{\chi(u_2(y))}-\frac{\si_{2,x}}{\chi(u_2(x))}\right) f d\nu \\
& \qquad - \int h_x^{\ell,j} \left(\frac{u_2(y)}{\chi(u_2(y))}-\frac{u_2(x)}{\chi(u_2(x))}\right) f d\nu \\
& =: I_1 - I_2.
\end{align*}
For $I_2$, we have
\begin{align*}
& \left|\frac{u_2(y)}{\chi(u_2(y))}-\frac{u_2(x)}{\chi(u_2(x))}\right| \\
& \le \frac1{\chi(u_2(x))\chi(u_2(y))} \{\chi(u_2(x)) |u_2(y)-u_2(x)|+|u_2(x)||\chi(u_2(x))-\chi(u_2(y))|\} \\
& \le Ce^{c_1K} |\nabla^1_{x,y}u_2|.
\end{align*}
On the other hand, $I_1$ can be rewritten as
\begin{align*}
I_1
& = \int \frac{h_x^{\ell,j}}{\chi(u_2(x))} (\si_{2,y}-\si_{2,x}) f d\nu
+ \int h_x^{\ell,j} \left(\frac1{\chi(u_2(y))}-\frac1{\chi(u_2(x))}\right) \si_{2,y} f d\nu \\
&=: I_{1,1}+I_{1,2}.
\end{align*}
For $I_{1,1}$, recalling the invariance of $h_x^{\ell,j}$, one can apply Lemma \ref{lem:IP-0} and obtain
$$
I_{1,1} = \frac1{\chi(u_2(x))} \int h_x^{\ell,j} \si_{2,x}\big( f(\si_1,\si_2^{x,y})-f(\si_1,\si_2)\big) d\nu +\frac1{\chi(u_2(x))}R_1.
$$
Finally for $I_{1,2}$,
$$
\left|\frac1{\chi(u_2(y))}-\frac1{\chi(u_2(x))}\right|
= \frac{|\chi(u_2(x))-\chi(u_2(y))|}{\chi(u_2(x))\chi(u_2(y))}
\le Ce^{2c_1K}|\nabla_{x,y}^1u_2|.
$$
Therefore, we obtain the conclusion.
\end{proof}

We can estimate the first term in the right hand side of \eqref{eq:2.15} with $y=x+e_j$
by the Dirichlet form and obtain

\begin{lem}  \label{lem:4.5}
Let $\nu=\nu_{u_1(\cdot), u_2(\cdot)}$ be the Bernoulli measure satisfying the same condition
as in Lemma \ref{lem:IP-0}.  Then, for every $\b>0$, we have
\begin{equation*}
\int h_x^{\ell,j}(\om_{2,x+e_j}-\om_{2,x}) f d\nu
\le \b \mathcal{D}_{x,x+e_j}(\sqrt{f};\nu) + \frac{C}\b e^{3c_1K}
\int (h_x^{\ell,j})^2 f d\nu +R_{2,x,j},
\end{equation*}
where $\mathcal{D}_{x,x+e_j}(\sqrt{f};\nu)$ is a piece of $\mathcal{D}(\sqrt{f};\nu)$
defined on the bond $\{x,x+e_j\}$ and $R_{2,x,j}$ has a bound \eqref{eq:4.5}.
\end{lem}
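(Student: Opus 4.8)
The plan is to start from Lemma~\ref{lem:2.13}, which already reduces the bond quantity $\int h_x^{\ell,j}(\om_{2,x+e_j}-\om_{2,x})f\,d\nu$ to a "discrete gradient of $f$" term plus the error $R_{2,x,j}$ controlled by \eqref{eq:4.5}. So the only real work is to bound the main term
$$
\frac1{\chi(u_2(x))}\int h_x^{\ell,j}\,\si_{2,x}\big(f(\si_1,\si_2^{x,x+e_j})-f(\si_1,\si_2)\big)\,d\nu
$$
by $\b\,\mathcal{D}_{x,x+e_j}(\sqrt f;\nu)+\frac{C}{\b}e^{3c_1K}\int (h_x^{\ell,j})^2 f\,d\nu$. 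First I would write $f(\si_2^{x,x+e_j})-f(\si_2)=(\sqrt{f(\si_2^{x,x+e_j})}-\sqrt{f(\si_2)})(\sqrt{f(\si_2^{x,x+e_j})}+\sqrt{f(\si_2)})$, the standard trick for converting a linear-in-$f$ expression into something comparable to the Dirichlet form.

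Next I would apply the elementary inequality $ab\le \b a^2+\tfrac{1}{4\b}b^2$ (Young) with $a=\sqrt{f(\si_2^{x,x+e_j})}-\sqrt{f(\si_2)}$ weighted appropriately and $b$ the remaining factor $h_x^{\ell,j}\si_{2,x}(\sqrt{f(\si_2^{x,x+e_j})}+\sqrt{f(\si_2)})/\chi(u_2(x))$. Integrating the $a^2$ part against $\nu$ reproduces (a constant multiple of) $\mathcal{D}_{x,x+e_j}(\sqrt f;\nu)$ — here one must be a little careful because the Dirichlet form in the paper is defined with the weight $d_2$ and symmetrizes the bond, but up to the harmless constant $d_2$ and a factor $2$ this is immediate. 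For the $b^2$ part, I would use $(\sqrt{f(\si_2^{x,x+e_j})}+\sqrt f)^2\le 2(f(\si_2^{x,x+e_j})+f)$, then perform the change of variables $\si_2\mapsto\si_2^{x,x+e_j}$ on the term carrying $f(\si_2^{x,x+e_j})$; this change of variables produces at most the factor $\sup|\nu_2(\si_2^{x,y})/\nu_2(\si_2)|\le 1+C_0e^{c_1K}|\nabla^1_{x,y}u_2|\le C$ (bounded since $|\nabla^1 u_2|$ is controlled and $e^{c_1K}|\nabla^1 u_2|$ stays bounded under the hypotheses), together with $\si_{2,x}\mapsto \si_{2,x+e_j}$, and uses the $\si_2$-invariance of $h_x^{\ell,j}$. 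Collecting, the $b^2$ contribution is bounded by $\frac{C}{\b}\,\frac{1}{\chi(u_2(x))^2}\int (h_x^{\ell,j})^2 f\,d\nu$, and since $1/\chi(u_2(x))^2\le C e^{2c_1K}$ by Lemmas~\ref{lem:3.1}--\ref{lem:3.2} (so $\le C e^{3c_1K}$, absorbing any residual exponential slack), this gives exactly the claimed bound. Finally I would carry the error $R_{2,x,j}$ along unchanged, since \eqref{eq:4.5} is already in the stated form.

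The main obstacle I anticipate is purely bookkeeping rather than conceptual: matching the precise normalization of $\mathcal{D}_{x,x+e_j}(\sqrt f;\nu)$ (the $\frac14$, the $d_2$, the symmetrized bond sum) so that the $a^2$ term really is $\le\b\,\mathcal{D}_{x,x+e_j}(\sqrt f;\nu)$ with the constant $\b$ as stated, and making sure the exponential prefactors from the two separate sources — the $1/\chi(u_2(x))$ factors in $I_1$ and the Radon--Nikodym ratio from the change of variables — combine into the single $e^{3c_1K}$ rather than a larger power. Both are handled by the crude bound $\chi(u_2(x))\ge \tfrac12\underbar u(t)(1-c_2)\ge c\,e^{-c_1K}$ and by noting $|\nabla^1 u_2|e^{c_1K}$ is $O(1)$ under (A1) and Proposition~\ref{prop:2.2} when $\de$ is small; I would simply choose the constant $C$ large enough to swallow everything. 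One should also double-check that the term dropped as "$\|h-h(\si_1,\si_2^{x,x+e_j})\|_\infty$" in Lemma~\ref{lem:IP-0} does not reappear here — it does not, because $h_x^{\ell,j}$ is a function of $\si_1$ alone, so that sup-norm difference vanishes identically, which is precisely why this lemma is stated for $h=h_x^{\ell,j}$.
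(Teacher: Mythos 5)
Your proposal is correct and follows essentially the same route as the paper: start from Lemma~\ref{lem:2.13}, factor $f(\si_1,\si_2^{x,x+e_j})-f(\si_1,\si_2)$ into sum and difference of square roots, apply Young's inequality to extract $\b\,\mathcal{D}_{x,x+e_j}(\sqrt f;\nu)$, and control the companion term via the change of variables $\si_2\mapsto\si_2^{x,x+e_j}$ (with Radon--Nikodym factor $1+r_{x,x+e_j}$) together with $1/\chi(u_2(x))^2\le e^{2c_1K}$, the product of exponential factors being absorbed into $e^{3c_1K}$. The only cosmetic difference is that the paper closes the estimate with the crude bound $|\nabla^1_{x,x+e_j}u_2|\le 2c_2$ rather than invoking Proposition~\ref{prop:2.2}, but this does not change the argument.
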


\begin{proof}
For simplicity, we write $y$ for $x+e_j$.
By decomposing $f(\si_1,\si_2^{x,y})-f(\si_1,\si_2) = \big( \sqrt{f(\si_1,\si_2^{x,y})}+\sqrt{f(\si_1,\si_2)}\big)
\big( \sqrt{f(\si_1,\si_2^{x,y})}-\sqrt{f(\si_1,\si_2)}\big)$, the first term in the right hand side of \eqref{eq:2.15}
can be estimated by
$$
\le \b \mathcal{D}_{x,y}(\sqrt{f};\nu) + \frac{C}{\b \chi(u_2(x))^2}
\int (h_x^{\ell,j})^2 \{f(\si_1,\si_2^{x,y})+f(\si_1,\si_2)\} d\nu.
$$
The integral in the second term divided by $\chi(u_2(x))^2$ is equal to and bounded by
\begin{align*}
 \frac1{\chi(u_2(x))^2}& \int (h_x^{\ell,j})^2 f(\si_1,\si_2) (1+r_{xy}(\si_2)) d\nu  \\
&\le  \frac{1+C_0 e^{c_1K} |\nabla^1_{x,y}u_2|}{\chi(u_2(x))^2}
 \int (h_x^{\ell,j})^2 f d\nu \\
&\le  e^{2c_1K}(1+C_0e^{c_1K}|\nabla_{x,y}^1u_2|) \int (h_x^{\ell,j})^2 f d\nu.
\end{align*}
This shows the conclusion by recalling $|\nabla_{x,y}^1u_2| \le 2c_2$.
\end{proof}

\begin{proof}[Proof of Proposition \ref{prop:2.9}]
Recalling \eqref{eq:2.13} and by Lemma \ref{lem:4.5} taking $\b= \frac{\a N^2}K$
with $\a>0$ sufficiently small, we have
\begin{align*}
\int &(V-V^\ell) f d\nu
= K \sum_{j=1}^d \sum_{x\in \T_N^d} \int h_x^{\ell,j} (\om_{2,x+e_j} -\om_{2,x}) f d\nu\\
& \le \a N^2 \mathcal{D}(\sqrt{f};\nu) + \frac{CK^2}{\a N^2} e^{3c_1K}
\sum_{j=1}^d \sum_{x\in \T_N^d} \int (h_x^{\ell,j})^2 f d\nu
+ K \sum_{j=1}^d \sum_{x\in \T_N^d} R_{2,x,j}.
\end{align*}
For $R_{2,x,j}$, since $|\nabla_{x,x+e_j}^1u_2| \le \frac{CK}N$ from Proposition \ref{prop:2.2},
by \eqref{eq:4.5} estimating $|h_x^{\ell,j}| \le 1+(h_x^{\ell,j})^2$, we have
$$
K |R_{2,x,j}| \le \frac{CK^2}N e^{3c_1K} \int \left(1+(h_x^{\ell,j})^2\right) fd\nu.
$$
Thus, we obtain
\begin{align*}
\int (V-V^\ell) f d\nu
\le&  \a N^2 \mathcal{D}(\sqrt{f};\nu) + \frac{C_\a K^2}{N} e^{3c_1K} \sum_{j=1}^d
\sum_{x\in \T_N^d}  \int (h_x^{\ell,j})^2 f d\nu
 + CK^2e^{3c_1K}N^{d-1}.
\end{align*}

For the second term, we first decompose the sum $\sum_{x\in \T_N^d}$ as
$\sum_{y\in \La_{2\ell}}\sum_{z\in (4\ell) \T_N^d}$ and apply the entropy inequality
noting that the variables $\{h_x^{\ell, j}\}$ are
$(2\ell-1)$-dependent:
\begin{align*}
\sum_{x\in \T_N^d}  \int (h_x^{\ell,j})^2 f d\nu
& \le \frac1\ga \sum_{y\in \La_{2\ell}} \left( H(\mu|\nu) +
\log \int \exp\left\{ \ga \sum_{z \in (4\ell)\T^d_N} (h_{z+y}^{\ell,j})^2 \right\} d\nu\right) \\
& = \frac1\ga (4\ell)^d \left( H(\mu|\nu) + \sum_{z \in (4\ell)\T^d_N}
\log\int \exp\left\{ \ga (h_{z+y}^{\ell,j})^2 \right\} d\nu \right).
\end{align*}
However, since $h_x^{\ell,j}$ is a weighted sum of independent
random variables, by applying Lemma \ref{lem:ci} (concentration inequality)
stated below, we have
$$
\log \int e^{\ga(h_x^{\ell, j})^2} d\nu \le 2
$$
for every $0<\ga \le \frac{C_0}{\si^2}$, where $C_0$ is a universal constant
and $\si^2$ is the supremum of the variances of $h_x^{\ell, j}$.  By Lemma
\ref{lem:flow},
$$
\si^2 \le C_d g_d(\ell).
$$
Therefore, we have
\begin{align*}
\sum_{x\in \T_N^d}  \int (h_x^{\ell,j})^2 f d\nu
 \le \frac1\ga (4\ell)^d \left( H(\mu|\nu) + 2 (\tfrac{N}{4\ell})^d  \right).
\end{align*}

Thus, choosing $\frac1\ga=\frac{C_d}{C_0}g_d(\ell)$, we have shown
\begin{align*}
\int (V-V^\ell) f d\nu
\le \a N^2 \mathcal{D}(\sqrt{f};\nu) +
\frac{\bar C_\a \ell^d g_d(\ell) K^2e^{3c_1K}}{N}
\left( H(\mu|\nu) + \frac{N^d}{\ell^d} \right)
+ CK^2e^{3c_1K}N^{d-1}.
\end{align*}
Now recall $1\le K \le \de \log N$ so that $e^{3c_1K} \le N^{3c_1\de}$ and choose
$\de>0$ such that $3c_1\de \le \k$ for a given small $\k>0$.
Choose $\ell = N^{\frac1d-\k}$ when $d\ge 2$ and $N^{\frac12-\k}$ when $d=1$.
Then, when $d\ge 2$, we have
$$
\frac{\ell^d g_d(\ell)K^2}{N}e^{3c_1K} \le CN^{-\k (d-1)} (\log N)^3\le1, \quad
\frac{N^d}{\ell^d} =N^{d-1+d\k},
\quad K^2e^{3c_1K}N^{d-1} \le N^{d-1+2\k},
$$
which shows \eqref{eq:2.11}.  When $d=1$,
$$
\frac{\ell^2K^2}{N}e^{3c_1K} \le CN^{-\k} (\log N)^2\le1, \quad
\frac{N}{\ell} =N^{\frac12+ \k},
\quad K^2 e^{3c_1K}N^{d-1} \le N^{2\k}.
$$
This shows the conclusion for $d=1$.
\end{proof}

\begin{lem} \label{lem:ci} (concentration inequality)
Let $\{X_i\}_{i=1}^n$ be independent random variables with values in the
intervals $[a_i,b_i]$.  Set $\bar{X}_i = X_i - E[X_i]$ and $\k = \sum_{i=1}^n (b_i-a_i)^2$.
Then, for every $\ga\in [0,\k^{-1}]$, we have
$$
\log E\left[\exp \left\{\ga \left(\sum_{i=1}^n \bar{X}_i\right)^2\right\} \right]
\le 2 \ga\k.
$$
\end{lem}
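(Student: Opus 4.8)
The plan is to combine Hoeffding's lemma with the classical Gaussian linearization of a quadratic exponent. Write $S := \sum_{i=1}^n \bar X_i$. First I would recall Hoeffding's lemma: since $\bar X_i$ has mean zero and takes values in an interval of length $b_i-a_i$, one has $E[e^{\la \bar X_i}] \le e^{\la^2 (b_i-a_i)^2/8}$ for every $\la \in \R$. By independence of the $\bar X_i$ this multiplies out to the sub-Gaussian bound
$$
E\big[e^{\la S}\big] \;\le\; e^{\la^2 \k/8}, \qquad \la \in \R .
$$

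Next I would remove the square in the exponent by introducing an auxiliary standard Gaussian variable $Z$, independent of $\{X_i\}$, and using the identity $e^{\ga s^2} = E_Z\big[e^{\sqrt{2\ga}\,sZ}\big]$, valid for all $\ga\ge 0$ and $s\in\R$ (it is just $E_Z[e^{tZ}]=e^{t^2/2}$ with $t=\sqrt{2\ga}\,s$). Applying it with $s=S$, taking expectations over the $X_i$, and interchanging the two expectations by Tonelli (everything is nonnegative), I obtain
$$
E\big[e^{\ga S^2}\big] \;=\; E_Z\,E\big[e^{\sqrt{2\ga}\,Z S}\big]
\;\le\; E_Z\big[e^{2\ga Z^2\k/8}\big]
\;=\; E_Z\big[e^{\ga\k Z^2/4}\big],
$$
where the middle inequality is the sub-Gaussian bound above applied conditionally on $Z$ with $\la=\sqrt{2\ga}\,Z$.

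Finally I would evaluate the remaining Gaussian integral via $E_Z[e^{aZ^2}]=(1-2a)^{-1/2}$, valid for $a<1/2$. Here $a=\ga\k/4\le 1/4$ by the hypothesis $\ga\le\k^{-1}$, so the integral converges and $E[e^{\ga S^2}] \le (1-\ga\k/2)^{-1/2}$. Taking logarithms and using the elementary bound $-\tfrac12\log(1-x)\le x$ for $x\in[0,\tfrac12]$ (which follows from $\log y\ge 1-1/y$, hence $\log(1-x)\ge -x/(1-x)\ge -2x$ on that range) with $x=\ga\k/2\in[0,\tfrac12]$, I arrive at $\log E[e^{\ga S^2}]\le \ga\k/2\le 2\ga\k$, which is the claimed estimate, in fact with room to spare. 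There is no genuine difficulty in this argument; the only point to watch is that $a=\ga\k/4$ must stay bounded away from $1/2$ so that the Gaussian integral is finite, and this is precisely what the restriction $\ga\in[0,\k^{-1}]$ guarantees.
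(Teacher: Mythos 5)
Your proof is correct and complete. The paper states this lemma without proof (it is the standard concentration bound also used in Jara--Menezes), so there is nothing to compare against; your route --- Hoeffding's lemma for each $\bar X_i$, independence to get the sub-Gaussian bound $E[e^{\la S}]\le e^{\la^2\k/8}$ for all $\la\in\R$, Gaussian linearization $e^{\ga s^2}=E_Z[e^{\sqrt{2\ga}\,sZ}]$ with Tonelli to desymmetrize the square, and the elementary bound $-\tfrac12\log(1-x)\le x$ on $[0,\tfrac12]$ --- is the standard derivation and every step checks out (in particular $a=\ga\k/4\le 1/4<1/2$ keeps the Gaussian integral finite, and you even obtain the sharper constant $\ga\k/2$ in place of $2\ga\k$).
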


The second step is to estimate the integral $\int V^\ell f d\nu$, where
$V^\ell$ is given by \eqref{eq:4.VL}.

\begin{prop}  \label{prop:4.7}
We assume the same conditions as Proposition \ref{prop:2.9}.
Then, for $\k>0$,  we have
\begin{equation}  \label{eq:4.prop47}
\int V^\ell f d\nu \le CK H(\mu_t|\nu_t) + C_\k N^{d-1+\k},
\end{equation}
with some $C_\k>0$ when $d\ge 2$.  When $d=1$, the last term is replaced
by $C_\k N^{\frac12+\k}$.
\end{prop}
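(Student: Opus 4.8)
The plan is to exploit the fact that $V^\ell$ is a sum of terms, each of which is a product of \emph{block averages} $\overleftarrow{(\tilde\om_1)}_{x,\ell}$ and $\overrightarrow{(\om_2)}_{x,\ell}$ built from different sites, and hence, in the reference measure $\nu_t$, consists of centered variables (note $\om_{i,x,t}$ has mean zero and $\tilde\om_{1,x}$ is a bounded multiple of $\om_{1,x}$) supported on disjoint blocks. The natural tool is the entropy inequality: for any $\ga>0$,
$$
\int V^\ell f d\nu \le \frac{1}{\ga}\left( H(\mu|\nu) + \log \int e^{\ga V^\ell}\,d\nu\right).
$$
So the task reduces to proving a moment/exponential bound on $V^\ell$ under $\nu_t$ alone, and then optimizing $\ga$.

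The key steps, in order. First, rewrite $V^\ell = K\sum_{x} \overleftarrow{(\tilde\om_1)}_{x,\ell}\,\overrightarrow{(\om_2)}_{x,\ell}$ and decompose the sum over $x\in\T_N^d$ into $O((N/\ell)^d)$ classes of sites that are mutually far apart (separation $\ge c\ell$, as in the proof of Proposition \ref{prop:2.9} with the $\sum_{y\in\La}\sum_{z\in(c\ell)\T_N^d}$ trick), so that within each class the summands are independent under $\nu$; use Hölder/sub-additivity of $\log\int e^{\ga(\cdot)}$ to reduce to a single class. Second, for a single block term $T_x := K\,\overleftarrow{(\tilde\om_1)}_{x,\ell}\,\overrightarrow{(\om_2)}_{x,\ell}$: the factor $\overrightarrow{(\om_2)}_{x,\ell}$ is an average of $|\La_\ell|$ independent centered variables, each bounded by $1/\chi(u_2)\le Ce^{c_1K}$ by Lemma \ref{lem:3.2}; likewise $\overleftarrow{(\tilde\om_1)}_{x,\ell}$ is an average of independent centered variables bounded by $Ce^{c_1K}$. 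Apply the concentration inequality (Lemma \ref{lem:ci}) to each average separately to get $\log\int e^{\ga_0 (\overrightarrow{(\om_2)}_{x,\ell})^2}d\nu \le 2$ for $\ga_0 \le c\,\ell^d e^{-2c_1K}$, and similarly for the $\tilde\om_1$-average. Third, combine the two factors: by $2ab\le a^2+b^2$ and independence of the $\si_1$- and $\si_2$-coordinates, control $\int e^{\ga T_x}d\nu$ in terms of the two one-sided exponential moments; summing over the $\sim(N/\ell)^d$ terms in one class, $\log\int e^{\ga V^\ell}d\nu \le C(N/\ell)^d$ for $\ga$ of order $\ell^d e^{-2c_1K}/K$ (up to constants). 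Fourth, feed this into the entropy inequality: choosing $\ga \asymp \ell^d e^{-2c_1K}/K$ gives
$$
\int V^\ell f d\nu \le \frac{CK e^{2c_1K}}{\ell^d}\Big( H(\mu|\nu) + C(N/\ell)^d\Big)
= \frac{CK e^{2c_1K}}{\ell^d} H(\mu|\nu) + CK e^{2c_1K} N^d \ell^{-2d}.
$$
With $\ell = N^{1/d-\k}$ (so $\ell^d = N^{1-d\k}$) and $e^{2c_1K}\le N^{2c_1\de}\le N^{\k}$ for $\de$ small, the first coefficient is $\le CK N^{-1+(d+1)\k}$; since we need the clean form $CK H(\mu|\nu)$, I would instead keep $\ell^{-d}Ke^{2c_1K}\le CK N^{-1+(d+1)\k}\le CK$ (it is in fact much smaller than $K$, but $\le CK$ suffices), and the additive term becomes $CK e^{2c_1K} N^{d}\cdot N^{-2+2d\k} = CK N^{d-2+2d\k+\k}\le C_\k N^{d-1+\k}$ after relabeling $\k$. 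For $d=1$, $\ell = N^{1/2-\k}$, $\ell^d = N^{1/2-\k}$, and the additive term is $CK e^{2c_1K} N \cdot N^{-1+2\k} = CK N^{2\k+\k}$, while $N^d\ell^{-2d}=N^{2\k}$ — one checks this is absorbed into $C_\k N^{1/2+\k}$ as claimed. This yields \eqref{eq:4.prop47}.

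The main obstacle I anticipate is bookkeeping the interplay between the block size $\ell$, the factor $K$, and the exponential weight $e^{c_1K}$ coming from the lower bound $u_i\ge u_0 e^{-Kt}$ on the reference profile (Lemma \ref{lem:3.2}): the one-site bound $|\om_{i,x}|\le Ce^{c_1K}$ enters squared through the concentration inequality, and one must verify that the constraint $A4_\de$, namely $K\le\de(\log N)^{1/2}$ — hence $e^{c_1 K}$ is only $N^{o(1)}$ — is strong enough that the $N^{-1}$ gains from averaging over $\ell^d\sim N^{1-d\k}$ sites beat all these exponential-in-$K$ losses with room to spare, so that only the linear-in-$K$ coefficient $CK$ in front of $H(\mu|\nu)$ survives while the deterministic remainder stays $O(N^{d-1+\k})$. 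A secondary technical point is making the ``disjoint blocks $\Rightarrow$ independence'' decomposition precise on the torus, but this is identical in structure to the argument already carried out for $V-V^\ell$ in the proof of Proposition \ref{prop:2.9}, so it can be quoted with minimal modification.
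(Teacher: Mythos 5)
Your overall strategy is the same as the paper's: split $\sum_{x\in\T_N^d}$ into $\Theta(\ell^d)$ congruence classes of mutually independent block products, apply the entropy inequality, and control the exponential moment of each product $\overleftarrow{(\tilde\om_1)}_{x,\ell}\,\overrightarrow{(\om_2)}_{x,\ell}$ using the independence of the $\si_1$- and $\si_2$-marginals together with Lemma \ref{lem:ci}. However, there are two intertwined problems in the bookkeeping, and the first masks the second. First, the decomposition produces $\Theta(\ell^d)$ classes each containing $\Theta((N/\ell)^d)$ sites (not ``$O((N/\ell)^d)$ classes'' as you write), and the entropy inequality must be applied once per class; the relative entropy term therefore appears $\Theta(\ell^d)$ times (equivalently, if you insist on a single application to all of $V^\ell$, the H\"older step across classes forces you to replace $\ga$ by $\ell^d\ga$ inside the exponential). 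With your choice $\ga\asymp \ell^d e^{-2c_1K}/K$ the correct output is
\begin{equation*}
\int V^\ell f\,d\nu \;\le\; C K e^{2c_1K}\Bigl(H(\mu_t|\nu_t)+C\,(N/\ell)^d\Bigr),
\end{equation*}
i.e.\ your prefactor $CKe^{2c_1K}/\ell^d$ is short by a factor $\ell^d$, and your additive term $N^d\ell^{-2d}$ should be $N^d\ell^{-d}$ (which is still $\le C_\k N^{d-1+\k}$, so no harm there).

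The second, and genuine, gap is now visible: the corrected coefficient of $H(\mu_t|\nu_t)$ is $CKe^{2c_1K}$, not $CK$. The factor $e^{2c_1K}$ is forced on you because Lemma \ref{lem:ci} is a \emph{range}-based bound and the summands of $\overrightarrow{(\om_2)}_{x,\ell}$ have range $1/\chi(u_2)\le Ce^{c_1K}$, so the admissible $\ga$ must be shrunk by $e^{-2c_1K}$. Since $K\to\infty$, $e^{2c_1K}$ is unbounded, so \eqref{eq:4.prop47} is not recovered in the stated form; and this is not cosmetic, because the $H$-coefficient is exponentiated by Gronwall in the proof of Theorem \ref{thm:EstHent}, and $\exp\{CtK^2e^{2c_1K}\}$ is no longer $N^{O(\de^2)}$ under (A4)$_\de$, which would destroy the final estimate. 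The paper avoids this by fixing $\ga=c\ell^d$ \emph{independently of $K$}: then the number of classes cancels against $1/\ga$ exactly, the $H$-coefficient is $K(4\ell)^d/\ga=CK$ on the nose, and every $e^{cK}=N^{o(1)}$ loss is confined to the per-block exponential moment, where it only inflates the additive term $KN^d\ell^{-d}\cdot N^{o(1)}\le C_\k N^{d-1+\k}$. To justify the per-block moment at this larger $\ga$ one should exploit that $\tilde\om_{1,x}$ is in fact bounded by a constant (since $u_1u_2/\chi(u_1)=u_2/(1-u_1)\le(1-c_2)^{-1}$) and use a variance-based rather than range-based exponential bound for the $\om_2$-average; the moral is that all $e^{cK}$ factors must be routed into the $N^{d-1+\k}$ remainder and kept out of the coefficient multiplying $H(\mu_t|\nu_t)$.
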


\begin{proof}
We again decompose the sum $\sum_{x\in \T_N^d}$ in \eqref{eq:4.VL} as
$\sum_{y\in \La_{2\ell}}\sum_{z\in (4\ell) \T_N^d}$, and then, noting the $(2\ell)$-dependence
of $\overleftarrow{(\tilde\om_1)}_{x,\ell} \overrightarrow{(\om_2)}_{x,\ell}$,
use the entropy inequality and the concentration inequality to show
\begin{align*}
\int V^\ell f d\nu
& \le \frac{K}\ga \sum_{y\in\La_{2\ell}} \left\{ H(\mu_t|\nu_t) +
\sum_{z\in (4\ell)\T_N^d} \log E^{\nu_t}[e^{\ga \overleftarrow{(\tilde\om_1)}_{z+y,\ell}
\overrightarrow{(\om_2)}_{z+y,\ell}}] \right\}  \\
& \le \frac{K(4\ell)^d}\ga \left\{ H(\mu_t|\nu_t) + \frac{N^d}{(4\ell)^d} C_1 \ga \ell^{-d}\right\},
\end{align*}
for $\ga=c\ell^d$ with $c>0$ small enough.
Note that, by the central limit theorem, $\overleftarrow{(\tilde\om_1)}_{x,\ell},
\overrightarrow{(\om_2)}_{x,\ell}$ are close to $C_2\ell^{-d/2}N(0,1)$ in law for large $\ell$,
respectively.  Since $\ell = N^{\frac1d-\k}$ when $d\ge 2$, we have $\frac{KN^d}{\ell^d}
\le N^{d-1+d\k} \cdot \de \log N$ and obtain \eqref{eq:4.prop47}.  When $d=1$,
since $\ell = N^{\frac12-\k}$, we have $\frac{KN^d}{\ell^d} = N^{\frac12 +\k} \de \log N$.
\end{proof}

\subsection{Proof of \eqref{2.10}}  \label{sec:4.2}

We now discuss the contribution of
$$
V_1 :=
-\frac{N^2}2 \sum_{x,y\in\T_N^d: |x-y|=1} (u_1(y)-u_1(x))^2\om_{1,x} \om_{1,y}
$$
in \eqref{eq:1.2}, which arises from the Kawasaki part.
The second term $V_2$ can be treated similarly.
We may think $N^2(u_1(y)-u_1(x))^2$ as if $K$
in the argument we have developed.
However, from Proposition \ref{prop:2.2}, we have
\begin{equation}  \label{eq:N2u}
N^2(u_1(y)-u_1(x))^2 \le C K^2.
\end{equation}
This means that we may replace $K$ by $K^2$ properly in the estimates obtained
in Propositions \ref{prop:2.9} and \ref{prop:4.7} for the first and second terms.
Since $K^2=\de^2\log N$ appearing in the error terms
can be absorbed by $N^\k$ for every $\k>0$,
this leads to
\begin{equation}  \label{eq:4.1+2}
\int(V_1+V_2) d\mu_t
\le \a N^2 \mathcal{D}\left( \sqrt{\tfrac{d\mu_t}{d\nu_t}}; \nu_t\right)
+ C_\k K^2 H(\mu_t|\nu_t) + C_{\a,\k} N^{d-1+\k},
\end{equation}
for every $a, \k>0$, when $d\ge 2$ and the last term is replaced by
$C_{\a,\k}N^{\frac12+\k}$  when $d=1$.

\section{Consequence of Theorem \ref{thm:EstHent}}  \label{sec:4}

Recall that $\mu_t^N$ is the distribution
of $\tilde\si^N(t)$ on $\mathcal{X}_N^2$ and
$u_i^N(t) = \{u_i^N(t,x)\}_{x\in \T_N^d}$, $i=1,2$
is the solution of the discretized hydrodynamic equation
\eqref{eq:HD-discre}.  The Bernoulli measure on $\mathcal{X}_N^2$
with mean $\{u_i^N(t,x)\}_{x\in \T_N^d}$ is denoted by $\nu_t^N$.
Then Theorem \ref{thm:EstHent} shows
$H(\mu_t^N|\nu_t^N) =o(N^d)$ under a proper choice of $K=K(N)
\nearrow \infty$.  We define macroscopic functions $u_i^N(t,r), r \in \T^d$
as step functions
\begin{equation}  \label{eq:4.10}
u_i^N(t,r) = \sum_{x\in\T_N^d} u_i^N(t,x) 1_{B(\frac{x}N,\frac1N)}(r),
\quad r \in \T^d,
\end{equation}
from the microscopic functions $u_i^N(t,x), x\in \T_N^d$,
where $B(\frac{x}N,\frac1N) = \prod_{j=1}^d [\frac{x_j}N-\frac1{2N},
\frac{x_j}N+\frac1{2N})$ is the box with center $\frac{x}N$ and side length $\frac1N$.

Under our choice of $K$, the entropy inequality
$$
\mu_t^N(A) \le \frac{\log 2 + H(\mu_t^N|\nu_t^N)}{\log \{1+1/\nu_t^N(A)\}}
$$
combined with
Proposition \ref{prop:4.4-1} stated below shows that
\begin{equation}  \label{eq:4.18}
\lim_{N\to\infty}\mu_t^N ({\cal A}_{N,t}^\e)=0,
\end{equation}
for every $\e>0$, where
\[
{\cal A}_{N,t}^\e \equiv
{\cal A}_{N,t,\fa}^\e := \left\{\eta\in{\cal X}_N \,; ~\left|
\lan \a_i^N,\fa\ran   - \lan u_i^N(t,\cdot),\fa\ran\right| > \e, \, i=1,2\right\},
\quad \fa \in C^\infty(\T^d).
\]

\begin{prop}  \label{prop:4.4-1}
There exists $C=C_{\e,\fa}>0$ such that
$$
\nu_t^N ({\cal A}_{N,t}^\e) \le e^{-CN^d}.
$$
\end{prop}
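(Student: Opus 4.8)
The plan is to prove the large-deviation bound by a standard exponential Chebyshev (Markov) argument, exploiting the product structure of the Bernoulli measure $\nu_t^N$. Fix $i\in\{1,2\}$ and $\fa\in C^\infty(\T^d)$. Writing $\lan\a_i^N,\fa\ran = \frac1{N^d}\sum_{x\in\T_N^d}\si_{i,x}\fa(\tfrac xN)$ and $\lan u_i^N(t,\cdot),\fa\ran = \frac1{N^d}\sum_{x\in\T_N^d}u_i^N(t,x)\fa(\tfrac xN) + o(1)$ (the error being the difference between the integral of the step function $u_i^N(t,r)$ against $\fa$ and the Riemann sum, which is $O(1/N)$ uniformly by smoothness of $\fa$ and the bound $0\le u_i^N\le 1$ from Lemma \ref{lem:3.1}), the event ${\cal A}_{N,t,\fa}^\e$ is contained, for $N$ large, in the event that $\big|\sum_x \bar\si_{i,x}\fa(\tfrac xN)\big| > \tfrac\e2 N^d$, where $\bar\si_{i,x} = \si_{i,x}-u_i^N(t,x)$ are independent, mean-zero, and bounded by $1$ in absolute value under $\nu_t^N$.

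The key step is then the exponential bound: for $\la>0$,
$$
\nu_t^N\Big( \sum_x \bar\si_{i,x}\fa(\tfrac xN) > \tfrac\e2 N^d \Big)
\le e^{-\la\e N^d/2}\prod_{x\in\T_N^d} E^{\nu_t^N}\big[e^{\la\fa(x/N)\bar\si_{i,x}}\big].
$$
Since each $\bar\si_{i,x}$ is bounded by $1$, Hoeffding's lemma gives $E^{\nu_t^N}[e^{\la\fa(x/N)\bar\si_{i,x}}] \le e^{\la^2\fa(x/N)^2/8} \le e^{\la^2\|\fa\|_\infty^2/8}$, so the product is bounded by $e^{N^d\la^2\|\fa\|_\infty^2/8}$. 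Optimizing the resulting bound $e^{-\la\e N^d/2 + N^d\la^2\|\fa\|_\infty^2/8}$ over $\la$ yields $e^{-c N^d}$ with $c = \e^2/(2\|\fa\|_\infty^2)$; the symmetric lower tail and a union bound over $i=1,2$ (and over the two components of the event, which only improves the constant) give the claim with $C=C_{\e,\fa}>0$.

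I do not expect a genuine obstacle here: the statement is a soft concentration estimate and all the required inputs — boundedness $0\le u_i^N(t,x)\le 1$, independence of coordinates under the Bernoulli measure, and smoothness of $\fa$ — are already in hand. The only point that needs a little care is the replacement of $\lan u_i^N(t,\cdot),\fa\ran$ (defined via the step function \eqref{eq:4.10}) by the Riemann sum $\frac1{N^d}\sum_x u_i^N(t,x)\fa(\tfrac xN)$, absorbing the $O(1/N)$ discrepancy into the slack $\e\to\e/2$ for $N$ large; this is harmless. One may alternatively invoke the concentration inequality Lemma \ref{lem:ci} directly with $X_x = \fa(\tfrac xN)\si_{i,x}$ in place of the Hoeffding argument, which produces the same conclusion.
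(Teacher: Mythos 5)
Your proposal is correct and follows essentially the same route as the paper: an exponential Chebyshev bound, factorization over sites by independence under the Bernoulli measure, a quadratic-in-$\ga$ bound on each factor (the paper does this by Taylor expansion at $\ga=0$, which is exactly Hoeffding's lemma in disguise), and optimization over the tilting parameter. The treatment of the $o(1)$ Riemann-sum discrepancy is also the same.
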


\begin{proof}
Since
$$
X_i:= \lan \a_i^N,\fa\ran   - \lan u_i^N(t,\cdot),\fa\ran
= \frac1{N^d} \sum_{x\in\T_N^d} \left\{\si_{i,x}-u_i^N(t,\frac{x}N)\right\}\fa(\frac{x}N) +o(1),
$$
for $\fa \in C^\infty(\T^d)$, we have
\begin{align*}
\nu_t^N({\cal A}_{N,t}^\e)
& \le e^{-\ga \e N^d} E^{\nu_t^N}[ e^{\ga N^d |X_i|}] \\
& \le e^{-\ga \e N^d} \left\{E^{\nu_t^N}[ e^{\ga N^d X_i}] + E^{\nu_t^N}[ e^{-\ga N^d X_i}] \right\},
\end{align*}
for every $\ga>0$.  However, by the independence of $\si_{i,x}$
under $\nu_t^N$, we have
\begin{align*}
E^{\nu_t^N}[ e^{\pm \ga N^d X_i}]
& = \prod_{x\in\T_N^d} E^{\nu_t^N}[ e^{\pm \ga \{\si_{i,x}-u_{i,x}\}\fa_x+o(1)}] \\
&= \prod_{x\in \T_N^d} \left\{ e^{\pm \ga(1-u_{i,x})\fa_x} u_{i,x}
+ e^{\mp \ga u_{i,x} \fa_x}(1-u_{i,x})\right\}+o(1),
\end{align*}
where $u_{i,x} = u_i^N(t,x)$ and $\fa_x = \fa(\frac{x}N)$.  However, by the Taylor's
formula applied at $\ga=0$, we see
$$
\left| e^{\pm \ga(1-u_{i,x})\fa_x} u_{i,x} + e^{\mp \ga u_{i,x} \fa_x}(1-u_{i,x}) - 1 \right|
\le \frac{\ga^2}2 C, \quad C=C_{\|\fa\|_\infty},
$$
for $0<\ga\le 1$.  Thus we obtain
$$
\nu_t^N({\cal A}_{N,t}^\e)\le e^{-\ga \e N^d + C\ga^2 N^d},
$$
for $\ga>0$ sufficiently small.  This shows the conclusion.
\end{proof}

\section{Convergence of the solution of the discretized hydrodynamic
equation to that of the free boundary problem}  \label{sec:5}

We show $u_i^N(t,r), t\in [0,T], r\in \T^d, i=1, 2$ appearing in
\eqref{eq:4.18}, which is defined by \eqref{eq:4.10}
from the solution of the discretized hydrodynamic
equation \eqref{eq:HD-discre}, converges to the unique weak solution of
the free boundary problem \eqref{eq:P}.
This can be done along with \cite{CDHMN}, in a discrete setting.
Once this is shown, combined with \eqref{eq:4.18}, the proof of
Theorem \ref{thm:1.1} is complete.

\begin{lem}
$$
\int_0^T\int_{\T^d} u_1^N(t,r)u_2^N(t,r) dtdr \le \frac1K.
$$
\end{lem}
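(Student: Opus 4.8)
The plan is to exploit the conservation of mass built into the discretized equation \eqref{eq:HD-discre}. Summing the equation for $u_1^N$ over all $x\in\T_N^d$, the discrete Laplacian term $d_1\sum_x \De^N u_1^N(t,x)$ vanishes, because $\De^N$ is a sum of nearest-neighbour differences on the periodic lattice and every bond is counted with both signs; hence
$$
\frac{d}{dt}\sum_{x\in\T_N^d} u_1^N(t,x) = -K\sum_{x\in\T_N^d} u_1^N(t,x)u_2^N(t,x).
$$
Integrating in $t$ over $[0,T]$ then gives
$$
K\int_0^T \sum_{x\in\T_N^d} u_1^N(t,x)u_2^N(t,x)\,dt = \sum_{x\in\T_N^d} u_1^N(0,x) - \sum_{x\in\T_N^d} u_1^N(T,x).
$$

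Next I would bound the right-hand side. By Lemma \ref{lem:3.1}, applied with the upper bound from (A1), we have $0\le u_i^N(t,x)\le c_2<1$ for all $t\ge 0$ and all $x$, so in particular $u_1^N(T,x)\ge 0$ and $u_1^N(0,x)\le 1$; therefore the right-hand side above is at most $\sum_x u_1^N(0,x)\le N^d$. Consequently
$$
\int_0^T \sum_{x\in\T_N^d} u_1^N(t,x)u_2^N(t,x)\,dt \le \frac{N^d}{K}.
$$

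Finally I would translate the lattice sum into the integral over $\T^d$. The boxes $B(\tfrac{x}{N},\tfrac1N)$, $x\in\T_N^d$, partition $\T^d$ up to a set of Lebesgue measure zero, and each has measure $N^{-d}$, so from the definition \eqref{eq:4.10} of the step functions $u_i^N(t,\cdot)$ we get
$$
\int_{\T^d} u_1^N(t,r)u_2^N(t,r)\,dr = \frac1{N^d}\sum_{x\in\T_N^d} u_1^N(t,x)u_2^N(t,x).
$$
Integrating this identity over $[0,T]$ and inserting the previous bound yields $\int_0^T\int_{\T^d} u_1^N u_2^N\,dr\,dt \le 1/K$, which is the claim.

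There is no serious obstacle here; the only two points requiring a moment's care are that the discrete heat operator conserves the total mass (so the Laplacian contribution drops after summation over the torus) and that all the quantities involved are nonnegative, both of which are immediate from \eqref{eq:HD-discre} and Lemma \ref{lem:3.1}. One could equally well sum the equation for $u_2^N$ instead, obtaining the same bound with $\sum_x u_2^N(0,x)$ on the right-hand side.
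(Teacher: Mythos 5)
Your proof is correct and follows essentially the same route as the paper: sum \eqref{eq:HD-discre} over $x\in\T_N^d$ so that the discrete Laplacian contribution vanishes by periodicity, integrate in time, bound $\sum_x u_1^N(0,x)-\sum_x u_1^N(T,x)$ by $N^d$ using $0\le u_i^N\le 1$, and divide by $N^d$ to pass to the integral over $\T^d$. No issues.
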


\begin{proof} (cf.\ Lemma 2.3 of \cite{CDHMN} with $\fa\equiv 1$)
From \eqref{eq:HD-discre}, we have
\begin{align*}
K& \sum_{x\in \T_N^d} \int_0^T u_1^N(t,x)u_2^N(t,x)dt  \\
& = d_1 \sum_{x\in \T_N^d} \int_0^T \De^N u_1^N(t,x)dt
+ \sum_{x\in \T_N^d} u_1^N(0,x) - \sum_{x\in \T_N^d} u_1^N(T,x)
\le N^d,
\end{align*}
which implies the conclusion.
\end{proof}

\begin{lem}
$$
\int_0^T\int_{\T^d} |\nabla^N u_i^N(t,r)|^2 dtdr \le \frac1{2d_i}, \quad i=1,2,
$$
where $\nabla^Nu(r) = \{N(u(r+\frac1N e_j) - u(r))\}_{j=1}^d$.
\end{lem}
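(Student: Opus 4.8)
The plan is to run the classical $L^2$ energy estimate on the discretized hydrodynamic equation \eqref{eq:HD-discre}, using the favorable sign of the killing term. First I would multiply \eqref{eq:HD-discre} by $u_i^N(t,x)$ and sum over $x\in\T_N^d$, obtaining
$$
\frac{d}{dt}\,\frac12\sum_{x\in\T_N^d}u_i^N(t,x)^2
= d_i\sum_{x\in\T_N^d}u_i^N(t,x)(\De^N u_i^N)(t,x)
-K\sum_{x\in\T_N^d}u_1^N(t,x)u_2^N(t,x)u_i^N(t,x).
$$
The last sum is $\le 0$ since $0\le u_i^N(t,x)$ for all $i,x,t$ by Lemma \ref{lem:3.1} (applied with $c=c_2$, using (A1)). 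For the first term I would invoke the discrete integration by parts identity
$$
\sum_{x\in\T_N^d}u(x)(\De^N u)(x)
= -\sum_{x\in\T_N^d}\sum_{j=1}^d N^2\big(u(x+e_j)-u(x)\big)^2
= -\sum_{x\in\T_N^d}|\nabla^N u(x)|^2,
$$
which follows by reorganizing the sum over the unordered nearest-neighbor bonds of $\T_N^d$. Hence
$$
\frac{d}{dt}\,\frac12\sum_{x\in\T_N^d}u_i^N(t,x)^2
\le -d_i\sum_{x\in\T_N^d}|\nabla^N u_i^N(t,x)|^2 .
$$

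Integrating in $t$ over $[0,T]$ and discarding the nonnegative term $\frac12\sum_x u_i^N(T,x)^2$ gives
$$
d_i\int_0^T\sum_{x\in\T_N^d}|\nabla^N u_i^N(t,x)|^2\,dt
\le \frac12\sum_{x\in\T_N^d}u_i^N(0,x)^2\le \frac{N^d}{2},
$$
where I used $0\le u_i^N(0,x)\le 1$ from (A1). It then remains to pass from the lattice sum to the integral over $\T^d$: the boxes $B(\tfrac xN,\tfrac1N)$, $x\in\T_N^d$, tile $\T^d$ with volume $N^{-d}$ each, and by the half-open definition \eqref{eq:4.10} one has $u_i^N(t,r)=u_i^N(t,x)$ and $u_i^N(t,r+\tfrac1N e_j)=u_i^N(t,x+e_j)$ for a.e.\ $r\in B(\tfrac xN,\tfrac1N)$, so $\nabla^N u_i^N(t,r)=\nabla^N u_i^N(t,x)$ a.e.\ on that box. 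Consequently
$$
\int_{\T^d}|\nabla^N u_i^N(t,r)|^2\,dr=\frac1{N^d}\sum_{x\in\T_N^d}|\nabla^N u_i^N(t,x)|^2,
$$
and combining with the previous display yields $\int_0^T\int_{\T^d}|\nabla^N u_i^N(t,r)|^2\,dr\,dt\le \tfrac1{2d_i}$.

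There is no real obstacle: this is simply the discrete counterpart of the classical energy identity, the $L^2$ analogue of the preceding lemma (and of the energy estimates in \cite{CDHMN}). The only points that need a little care are the bookkeeping in the discrete summation by parts — counting each bond $\{x,x+e_j\}$ exactly once so that the constant matches $|\nabla^N u(x)|^2=\sum_{j=1}^d N^2(u(x+e_j)-u(x))^2$ — and the identification of the step-function gradient $\nabla^N u_i^N(t,r)$ with the lattice gradient $\nabla^N u_i^N(t,x)$, which is immediate from the tiling in \eqref{eq:4.10}.
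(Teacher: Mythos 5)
Your proof is correct and is essentially the paper's own argument: the same $L^2$ energy estimate obtained by testing \eqref{eq:HD-discre} with $u_i^N$, using discrete summation by parts, the sign of the killing term, and $0\le u_i^N(0,\cdot)\le 1$; the only cosmetic difference is that you work on the lattice and convert to the step functions at the end, whereas the paper writes the identity directly in terms of integrals over $\T^d$.
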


\begin{proof} (cf.\ Lemma 2.4 of \cite{CDHMN} with $\fa\equiv 1$)
From \eqref{eq:HD-discre}, we have
\begin{align*}
\frac12\frac{d}{dt} \int_{\T^d}  u_1^N(t,r)^2 dr
+ d_1 \int_{\T^d}  |\nabla^N u_1^N(t,r)|^2 dr
= -K \int_{\T^d}  u_1^N(t,r)^2u_2^N(t,r) dr \le 0,
\end{align*}
and this implies
\begin{align*}
d_1 \int_0^T dt \int_{\T^d}  |\nabla^N u_1^N(t,r)|^2 dr
\le \frac12 \int_{\T^d}  \left\{u_1^N(0,r)^2- u_1^N(T,r)^2\right\} dr \le \frac12.
\end{align*}
The proof for $u_2^N$ is similar.
\end{proof}

These two lemmas with the help of Fr\'echet-Kolmogorov theorem show that
$\{u_i^N(t,r)\}_N$ are relatively compact in $L^2([0,T]\times\T^d)$.
In fact, two lemmas prove the equi-continuity of $\{u_i^N(t,r)\}_N$
in the space $L^2([0,T]\times\T^d)$ as in Lemmas 2.6 and 2.7 of
\cite{CDHMN}.

\begin{cor} \label{cor:5.3}
(cf.\ Corollary 3.1 of \cite{CDHMN})
From any subsequence of  $\{u_i^{N}(t,r)\}_N$, $i=1,2$,
one can find further subsequences $\{u_i^{N_k}(t,r)\}_k$, $i=1,2$, and
$u_i \in L^2([0,T]\times\T^d)$, $i=1,2$ such that
$$
u_i^{N_k} \to u_i \quad \text{ strongly in } L^2([0,T]\times\T^d)
\text{ and a.e.\ in } [0,T]\times\T^d
$$
as $k\to\infty$.
\end{cor}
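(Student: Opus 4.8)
Corollary \ref{cor:5.3} follows from the two preceding lemmas together with standard compactness arguments, so the plan is to combine these ingredients and then extract a diagonal subsequence.

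First I would record that, by the two lemmas immediately above, the family $\{u_i^N\}_N$ is bounded in $L^2([0,T]\times\T^d)$: indeed $0\le u_i^N\le c_2$ pointwise by Lemma \ref{lem:3.1}, so the family is in fact bounded in $L^\infty$, and in particular in $L^2$. The second of the two lemmas shows moreover that $\{\nabla^N u_i^N\}_N$ is bounded in $L^2([0,T]\times\T^d)$ uniformly in $N$. To apply the Fr\'echet--Kolmogorov (Kolmogorov--Riesz) compactness criterion in $L^2([0,T]\times\T^d)$ one needs, in addition to $L^2$-boundedness, equi-continuity of translations in $r$ and in $t$. Equi-continuity in the spatial variable is a direct consequence of the uniform $L^2$-bound on the discrete gradients: for $|h|$ small one writes $u_i^N(t,r+h)-u_i^N(t,r)$ as a telescoping sum of one-step increments (plus a boundary error of size $O(1/N)$ coming from the step-function structure \eqref{eq:4.10}), and the $L^2$-norm of this difference is controlled by $|h|$ times $\|\nabla^N u_i^N\|_{L^2}$, uniformly in $N$; this is exactly the argument of Lemmas 2.6--2.7 of \cite{CDHMN} transcribed to the discrete setting. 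Equi-continuity in time follows from the discretized equation \eqref{eq:HD-discre}: testing $u_i^N(t+\tau,\cdot)-u_i^N(t,\cdot)$ against a smooth function and using that $\De^N u_i^N$ is, after summation by parts, controlled by $\|\nabla^N u_i^N\|_{L^2}$ while the reaction term $Ku_1^Nu_2^N$ has $\int_0^T\int u_1^Nu_2^N\le 1/K$ bounded, one obtains a modulus of continuity in $\tau$ that is uniform in $N$; again this mirrors the corresponding step in \cite{CDHMN}.

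Granting these three properties, the Fr\'echet--Kolmogorov theorem gives that $\{u_i^N\}_N$ is relatively compact in $L^2([0,T]\times\T^d)$ for each $i=1,2$. Therefore, given any subsequence, one first extracts a sub-subsequence along which $u_1^{N}$ converges strongly in $L^2$ to some $u_1\in L^2([0,T]\times\T^d)$, and then, from that sub-subsequence, a further one along which $u_2^{N}$ converges strongly in $L^2$ to some $u_2$; relabelling this as $\{N_k\}$ gives simultaneous strong $L^2$-convergence of both components. Passing once more to a subsequence (not relabelled) one also obtains convergence a.e.\ in $[0,T]\times\T^d$, as strong $L^2$-convergence always does along a subsequence. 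This yields the statement.

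The only genuinely non-routine point is the verification of time equi-continuity, because the discretized equation \eqref{eq:HD-discre} contains the singular-looking term $Ku_1^Nu_2^N$ with $K=K(N)\to\infty$; the main obstacle is to see that this term does not spoil equi-continuity. The resolution is that it enters only through its time integral, and $\int_0^T\int_{\T^d}Ku_1^Nu_2^N\,dr\,dt\le 1$ by the first lemma, so its contribution to $\|u_i^N(t+\tau)-u_i^N(t)\|$ (tested against smooth functions, or estimated in a negative Sobolev norm) is $O(\tau)$ uniformly in $N$. Once one is willing to measure time-increments in a weak norm and then interpolate against the uniform $H^1$-in-space bound — precisely the mechanism behind Lemmas 2.6--2.7 of \cite{CDHMN} — the argument goes through unchanged, and everything else is a mechanical transcription of the continuum argument to the discrete lattice.
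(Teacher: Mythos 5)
Your argument is correct and is essentially the paper's: relative compactness in $L^2([0,T]\times\T^d)$ via Fr\'echet--Kolmogorov, with spatial equi-continuity from the uniform $L^2$-bound on $\nabla^N u_i^N$ and temporal equi-continuity from \eqref{eq:HD-discre} together with the bound $K\int_0^T\int u_1^N u_2^N\le 1$, exactly as in Lemmas 2.6--2.7 of \cite{CDHMN}, followed by the standard successive extraction of subsequences and passage to a.e.\ convergence. The paper gives no further detail beyond citing those lemmas, so your write-up is, if anything, more explicit about where the only delicate point (the reaction term entering through its time integral) lies.
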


\begin{lem} \label{lem:5.4}
(cf.\ Lemma 3.2 of \cite{CDHMN})
$u_1 u_2 =0$ a.e.\ in $[0,T]\times\T^d$.
\end{lem}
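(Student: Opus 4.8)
The plan is to pass to the limit, along the subsequence $\{N_k\}$ of Corollary \ref{cor:5.3}, in the $L^1$ bound $\int_0^T\int_{\T^d} u_1^N u_2^N\,dr\,dt\le 1/K$ supplied by the preceding lemma, exploiting that $K=K(N)\to\infty$ by (A4)$_\de$.

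First I would record a uniform $L^\infty$ bound on the $u_i^N$. By Lemma \ref{lem:3.1} together with the upper bound $u_i^N(0,x)\le c_2$ from (A1), we have $0\le u_i^N(t,x)\le c_2<1$ for all $N$, $t$, $x$ and $i=1,2$; hence the step functions $u_i^N(t,r)$ defined by \eqref{eq:4.10} satisfy $0\le u_i^N(t,r)\le c_2$, and the a.e.\ limits of Corollary \ref{cor:5.3} satisfy $0\le u_i\le c_2$ as well. Consequently $u_1^{N_k}u_2^{N_k}$ is dominated by $c_2^2$ and converges a.e.\ in $[0,T]\times\T^d$ to $u_1 u_2$; equivalently, writing $u_1^{N_k}u_2^{N_k}-u_1u_2=u_1^{N_k}(u_2^{N_k}-u_2)+u_2(u_1^{N_k}-u_1)$, the uniform bound and Cauchy--Schwarz give $u_1^{N_k}u_2^{N_k}\to u_1u_2$ in $L^1([0,T]\times\T^d)$.

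It then remains to take $k\to\infty$ in
$$
\int_0^T\int_{\T^d} u_1^{N_k}(t,r)u_2^{N_k}(t,r)\,dr\,dt\le\frac1{K(N_k)}.
$$
The left-hand side converges to $\int_0^T\int_{\T^d}u_1 u_2\,dr\,dt$ by the previous step, while the right-hand side tends to $0$ since $K(N_k)\to\infty$. As $u_1,u_2\ge 0$, the nonnegative function $u_1 u_2$ has vanishing integral over $[0,T]\times\T^d$, and therefore $u_1 u_2=0$ a.e.

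There is no substantial difficulty in the argument; the only point to watch is the passage to the limit in the product, which is why the uniform bound $0\le u_i^N\le c_2$ (rather than merely $\le 1$) is invoked, so that a.e.\ convergence along the subsequence already upgrades to $L^1$ convergence of $u_1^{N_k}u_2^{N_k}$. The genuinely nontrivial inputs — the dissipation bound $\int u_1^N u_2^N\le 1/K$ and the $L^2$ compactness of Corollary \ref{cor:5.3} — are already in place.
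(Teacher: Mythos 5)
Your argument is correct and is exactly the route the paper intends: combine the bound $\int_0^T\int_{\T^d}u_1^Nu_2^N\,dr\,dt\le 1/K$ with the a.e.\ convergence from Corollary \ref{cor:5.3} and $K(N)\to\infty$ from (A4)$_\de$ (this is precisely the argument of Lemma 3.2 of \cite{CDHMN}, which the paper cites in lieu of a proof). The only cosmetic remark is that Fatou's lemma applied to the nonnegative a.e.\ limit would let you skip the $L^1$-convergence step entirely, but your dominated-convergence version is equally valid.
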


Set
$$
w^N:= u_1^N-u_2^N \quad \text{ and } \quad w:=u_1-u_2.
$$
From Corollary \ref{cor:5.3} and Lemma \ref{lem:5.4}, $w^{N_k}\to w$ strongly in
$L^2([0,T]\times\T^d)$ and a.e.\ in $[0,T]\times\T^d$
as $k\to\infty$ and furthermore
$$
u_1=w^+ \quad \text{ and } \quad u_2=w^-.
$$

\begin{prop}
$w$ is the unique weak solution of \eqref{eq:P}.
\end{prop}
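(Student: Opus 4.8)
The plan is to pass to the limit, along the subsequence $\{N_k\}$ of Corollary~\ref{cor:5.3}, in the weak form of the discretized hydrodynamic equation \eqref{eq:HD-discre}. The algebraic key is that subtracting the two equations in \eqref{eq:HD-discre} makes the killing terms $-Ku_1^Nu_2^N$ cancel \emph{exactly}, so that $w^N=u_1^N-u_2^N$ solves the \emph{linear} lattice equation
$$
\partial_t w^N(t,x)=\De^N\big(d_1u_1^N-d_2u_2^N\big)(t,x),\qquad x\in\T_N^d,
$$
with no surviving factor of $K$. Since the right-hand side of \eqref{eq:HD-discre} is a polynomial vector field, each $u_i^N(\cdot,x)$, hence $w^N(\cdot,r)$, is $C^1$ in $t$, and $w^N(0,\cdot)=u_1^N(0,\cdot)-u_2^N(0,\cdot)=:w_0^N$. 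Also $0\le u_i^N\le c_2<1$ by Lemma~\ref{lem:3.1}, so $|w^N|\le c_2$ and condition (i) of Definition~\ref{def1.1} passes to the limit, $w\in L^\infty([0,T]\times\T^d)$.

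First I would fix $\psi\in C^{1,2}([0,T]\times\T^d)$ with $\psi(T,\cdot)=0$, multiply the displayed lattice identity by $\psi(t,x/N)$, average over $x\in\T_N^d$, integrate by parts in $t$ (legitimate by the $C^1$-in-$t$ regularity, using $\psi(T,\cdot)=0$) and perform a discrete integration by parts in $x$ (the graph Laplacian being self-adjoint, and $\De^N\psi\to\De\psi$ uniformly since $\psi\in C^{1,2}$). Rewriting in the macroscopic variables \eqref{eq:4.10} and using the uniform bound on $u_i^N$ to control the Riemann-sum errors, this gives
$$
\int_0^T\!\!\int_{\T^d}\!\big(w^N\,\partial_t\psi+(d_1u_1^N-d_2u_2^N)\,\De\psi\big)\,dr\,dt=-\int_{\T^d} w_0^N\,\psi(0,\cdot)\,dr+o(1),\qquad N\to\infty .
$$
Along $\{N_k\}$ we have $w^{N_k}\to w$ strongly in $L^2([0,T]\times\T^d)$ by Corollary~\ref{cor:5.3}, and $w_0^{N_k}\to u_1(0,\cdot)-u_2(0,\cdot)$ weakly in $L^2(\T^d)$ by (A3), which is precisely the initial datum of \eqref{eq:P}; the only term that is not a plainly continuous function of these convergences is $d_1u_1^{N_k}-d_2u_2^{N_k}$.

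This last term is handled using segregation. By Corollary~\ref{cor:5.3}, $u_i^{N_k}\to u_i$ strongly in $L^2$, so $d_1u_1^{N_k}-d_2u_2^{N_k}\to d_1u_1-d_2u_2$ in $L^2$; and by Lemma~\ref{lem:5.4} together with $u_1=w^+$, $u_2=w^-$ this limit equals $d_1w^+-d_2w^-=\mathcal{D}(w)$ a.e. (Equivalently, $d_1u_1^N-d_2u_2^N-\mathcal{D}(w^N)=(d_1-d_2)\min(u_1^N,u_2^N)$, whose $L^1$ norm is $\le C\,(\int_0^T\!\int_{\T^d}u_1^Nu_2^N\,dr\,dt)^{1/2}\le C/\sqrt K\to0$ by the bound $\int_0^T\!\int_{\T^d}u_1^Nu_2^N\,dr\,dt\le 1/K$ of this section and Cauchy--Schwarz.) Passing to the limit along $\{N_k\}$ therefore yields
$$
\int_0^T\!\!\int_{\T^d}\!\big(w\,\partial_t\psi+\mathcal{D}(w)\,\De\psi\big)\,dr\,dt=-\int_{\T^d} w_0\,\psi(0,\cdot)\,dr
$$
for every admissible $\psi$, i.e.\ condition (ii) of Definition~\ref{def1.1}; hence $w$ is a weak solution of \eqref{eq:P}. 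Finally, the weak solution of \eqref{eq:P} is unique by \cite{CDHMN}, Corollary~3.8, so every subsequential limit of the relatively compact family $\{w^N\}$ equals this unique weak solution, the whole family converges to $w$, and $w$ is the unique weak solution — which, together with the a.e.\ convergence of $u_i^N$ and \eqref{eq:4.18}, also completes the proof of Theorem~\ref{thm:1.1}. I expect the main obstacle to be this identification step: it is where the segregation $u_1u_2=0$ is essential, and where one genuinely needs the \emph{strong} $L^2$-compactness produced by the two energy estimates of this section, mere weak convergence being insufficient to pass an $L^2$-limit through the nonlinearity $\mathcal{D}$.
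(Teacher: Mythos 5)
Your proof is correct and follows essentially the same route as the paper: subtract the two equations in \eqref{eq:HD-discre} so the killing terms cancel, test against $\psi$, pass to the limit along $\{N_k\}$ using the strong $L^2$ convergence of Corollary \ref{cor:5.3}, identify $d_1u_1-d_2u_2=\mathcal{D}(w)$ via the segregation $u_1u_2=0$, and invoke the uniqueness result of \cite{CDHMN}. Your write-up merely makes explicit the identification step and the quantitative bound on $\min(u_1^N,u_2^N)$ that the paper leaves implicit.
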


\begin{proof}
It is sufficient to check the property (ii) of Definition
\ref{def1.1} for $w$.  From \eqref{eq:HD-discre}, for $\psi\in
C^{1,2}([0,T]\times \T^d)$ such that $\psi(T,0)=0$,
\begin{align*}
\int_0^T \int_{\T^d} &(u_1^N(t,r)-u_2^N(t,r)) \partial_t\psi(t,r) dr dt
- \int_{\T^d}(u_1^N(0,r)-u_2^N(0,r)) \psi(0,r) dr \\
& = \int_0^T \int_{\T^d}(d_1 u_1^N(t,r)-d_2 u_2^N(t,r)) \De^N \psi(t,r) dr dt.
\end{align*}
We obtain the property (ii) for $w$ by passing to the limit $k\to\infty$
along with the subsequence $N=N_k$.
\end{proof}

Because of the uniqueness of $w$, without taking subsequences,
$u_i^N(t,r)$, $i=1,2$ themselves converge to $u_i(t,r)$
strongly in $L^2([0,T]\times\T^d)$ and a.e.\ in $[0,T]\times\T^d$
as $N\to\infty$.  This combined with \eqref{eq:4.18}
completes the proof of Theorem \ref{thm:1.1}.

\bigskip

\noindent {\bf Acknowledgments:}
T. Funaki is supported in part by JSPS KAKENHI, Grant-in-Aid
for Scientific Researches (A) 18H03672 and (S) 16H06338.
E. Presutti thanks the GSSI. 
M.~E.~Vares acknowledges support of CNPq (grant 305075/2016-0)
and FAPERJ (grant E-26/203.048/2016).

\medskip

\noindent Anna De Masi \\
Dipartimento di Ingegneria e Scienze dell'Informazione e Matematica, \\
Universit\`a degli studi dell'Aquila, L'Aquila, 67100 Italy\\
e-mail: demasi@univaq.it

\medskip

\noindent Tadahisa Funaki \\
Department of Mathematics, School of Fundamental Science and Engineering, \\
Waseda University, 3-4-1 Okubo, Shinjuku-ku, Tokyo 169-8555, Japan. \\
e-mail: funaki@ms.u-tokyo.ac.jp
\medskip

\noindent Errico Presutti \\
GSSI, viale F. Crispi 7, 67100 L'Aquila, Italy\\
e-mail: errico.presutti@gmail.com
\medskip

\noindent Maria Eulalia Vares \\
Instituto de Matem\'atica. Universidade Federal do Rio de Janeiro, \\
Centro de Tecnologia - Bloco C, Av. Athos da Silveira Ramos, 149, \\
Ilha do Fund\~ao, 21941--909, Rio de Janeiro, RJ, Brazil \\
e-mail: eulalia@im.ufrj.br

\end{document}